\newtheorem{theorem}{Theorem}[section]
\newtheorem{lemma}[theorem]{Lemma}
\newtheorem{corollary}[theorem]{Corollary}
\newtheorem{claim}[theorem]{Claim}
\newtheorem{conjecture}[theorem]{Conjecture}
\theoremstyle{definition}
\newtheorem{definition}[theorem]{Definition}
\newtheorem{notation}[theorem]{Notation}
\theoremstyle{remark}
\newtheorem{remark}[theorem]{Remark}
\newtheorem{example}[theorem]{Example}
\newcommand{\C}{\mathcal{C}}
\newcommand{\K}{\mathcal{K}}
\newcommand{\F}{\mathcal{F}}
\newcommand{\M}{\mathcal{M}}
\newcommand{\N}{\mathcal{N}}
\newcommand{\Q}{\mathcal{Q}}
\newcommand{\cs}{\mathcal{S}}
\theoremstyle{plain}
\numberwithin{equation}{section}
\theoremstyle{definition}
\theoremstyle{remark}
\newcommand{\D}{\mathcal{D}}
\newcommand{\R}{\mathcal{R}}
\newcommand{\cp}{\mathcal{P}}
\newcommand{\G}{\mathcal{G}}
\newcommand{\ch}{\mathcal{H}}
\newcommand{\tb}{\tilde{B}}
\newcommand{\T}{\mathcal{T}}
\newcommand{\vc}{V^\circ}
\newcommand{\cb}{\mathcal B}
\newcommand{\es}{\mathcal{E}}
\newcommand{\tes}{\tilde{\mathcal{E}}}
\newcommand{\ines}{\mathcal{IE}}
\newcommand{\tines}{\tilde{\mathcal{IE}}}
\title{Cooperative conditions for the existence of rainbow matchings}
\begin{document}

\author{Ron Aharoni
\footnote{Department of Mathematics, Technion, Haifa, Israel 
and MIPT, Dolgoprudny, Russia.
E-mail: \texttt{raharoni@gmail.com}.
}
\and 
Joseph Briggs
\footnote{
Department of Mathematics and Statistics,
Auburn University, AL, USA.
E-mail: \texttt{jgb0059@auburn.edu}.
}
\and
Minho Cho
\footnote{
Department of Mathematical Sciences, KAIST, Daejeon, Republic of Korea.
E-mail: \texttt{mhc0925@kaist.ac.kr}.
}
\and
Jinha Kim
\footnote{Corresponding Author. 
Discrete Mathematics Group, Institute for Basic Science (IBS), Daejeon, Republic of Korea.
E-mail: \texttt{jinhakim@ibs.re.kr}.
}
}


\date\today

\maketitle

\begin{abstract}
Let $k>1$, and let $\mathcal{F}$ be a  family of   $2n+k-3$ non-empty sets of edges in a bipartite graph. If the union of every $k$ members of $\mathcal{F}$  contains a matching of size $n$,  then there exists an $\mathcal{F}$-rainbow matching of size $n$. Replacing $2n+k-3$ by $2n+k-2$, the result is true also for $k=1$, and it can be proved (for all $k$) both topologically and by a relatively simple combinatorial argument. The main effort is in gaining the last $1$, which makes the result sharp. 
\end{abstract}

\section{Introduction}
Throughout the paper, ``family'' means ``multiset'', meaning that elements may repeat. 
To differentiate the notation, we use round brackets for  families, and (as usual) curly brackets for sets. 
For a family $\F$, we write $\F \setminus \{F\}$ and $\F \cup \{F\}$ in the family sense. That is, $\F\setminus\{F\}$ contains one less copy of $F$ than $\F$ if $F \in \F$, and $\F \cup \{F\}$ contains one more copy of $F$ than $\F$.

Given a family $\cs=(S_1, \ldots,S_m)$ of sets, an $\cs$-{\em rainbow set} is
the image of a partial choice function of $\cs$. So, it is a set $\{x_{i_j} \mid j\le k\}$, where $1 \le i_1< \ldots <i_k\le m$ and $x_{i_j}\in S_{i_j}$.  

A {\em complex} is a closed down hypergraph, meaning that any subset of any edge is an edge. 
The injectivity  - at most one element from every set $S_i$ - 
is a ``smallness'' condition, in the sense that the set of injective choices is a  complex.  Hence statements of interest are of the form ``there exists a large rainbow set satisfying certain conditions (like being a matching)''. The classical theorem  of this type is Hall's marriage theorem. 

Below, again, $\cs=(S_1, \ldots,S_m)$ is a family of sets. 
For a set $I \subseteq [m]$, let $\cs_I=\bigcup_{i \in I}S_i$.

\begin{theorem}
    If $|\cs_J| \ge |J| $ for every $J \subseteq [m]$ then there is a full rainbow set, that is, a rainbow set of size $m$. 
\end{theorem}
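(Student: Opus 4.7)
The plan is to prove the theorem by induction on $m$, the number of sets in the family. The base case $m=1$ is immediate: the condition $|\cs_{\{1\}}| \geq 1$ means $S_1$ is nonempty, and any element of $S_1$ forms a rainbow set of size $1$.

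For the inductive step, I would split into two cases depending on how tight the Hall condition is. \textbf{Case A (strict Hall):} suppose $|\cs_J| > |J|$ for every nonempty proper subset $J \subsetneq [m]$. Pick any element $x \in S_m$ and consider the family $(S_1 \setminus \{x\}, \dots, S_{m-1} \setminus \{x\})$. Removing one element from each set decreases each $|\cs_J|$ by at most one, so strict Hall becomes ordinary Hall, and the induction hypothesis produces a rainbow set of size $m-1$ avoiding $x$. Appending $x$ gives the desired rainbow set of size $m$. \textbf{Case B (tight Hall):} there exists a nonempty proper $J \subsetneq [m]$ with $|\cs_J| = |J|$. Apply the induction hypothesis to the subfamily indexed by $J$ to obtain a rainbow set $X \subseteq \cs_J$ of size $|J|$. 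Then consider the remaining family $(S_i \setminus X : i \in [m] \setminus J)$ on the ground set $\cs_{[m]} \setminus X$.

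The main obstacle, and really the only non-trivial step, is verifying that the remaining family in Case B satisfies Hall's condition, so that induction can be applied again. Concretely, for any $K \subseteq [m] \setminus J$, one must show that $\bigl| \bigcup_{i \in K}(S_i \setminus X)\bigr| \geq |K|$. The key observation is that $\bigl| \bigcup_{i \in K}(S_i \setminus X)\bigr| = |\cs_{J \cup K}| - |X| = |\cs_{J \cup K}| - |J|$, where the first equality uses $X \subseteq \cs_J$ of size exactly $|J| = |\cs_J|$ (so $X$ fills $\cs_J$ entirely and therefore $(S_i \setminus X) \cap \cs_J = S_i \setminus \cs_J$). Applying the original Hall hypothesis to $J \cup K$ gives $|\cs_{J \cup K}| \geq |J \cup K| = |J| + |K|$, which yields the needed bound. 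The two rainbow sets, one from $J$ and one from $[m] \setminus J$, are disjoint by construction and together form a full rainbow set of size $m$.

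Once both cases are established, the induction closes. The case split is essentially forced by the structure of the problem: without a tight subset one has slack to commit to an arbitrary choice, while with a tight subset one is forced to resolve that subset first and then recurse.
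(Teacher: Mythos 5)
Your proof is correct: it is the classical critical-set (Halmos--Vaughan) induction for Hall's theorem, and both cases check out -- in Case A the strict inequality absorbs the deletion of $x$ for every nonempty $J \subseteq [m-1]$, and in Case B the tightness $|X|=|J|=|\cs_J|$ forces $X=\cs_J$, so the counting $\bigl|\bigcup_{i\in K}(S_i\setminus X)\bigr| = |\cs_{J\cup K}|-|J| \geq |K|$ is valid and both subproblems have strictly fewer than $m$ sets. Note that the paper states this theorem as the classical marriage theorem and gives no proof of its own, so there is nothing to compare against; your argument is the standard one and fills that gap correctly.
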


Another well-known rainbow result is Drisko's theorem, on rainbow matchings. The following slightly more general version of the original theorem was proved in \cite{ab}:

\begin{theorem}\cite{drisko}\label{drisko}
    $2n-1$ matchings in a bipartite graph, of size $n$ each, have a rainbow matching of size $n$. 
\end{theorem}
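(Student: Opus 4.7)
The plan is to prove the theorem by contradiction. Suppose $M_1,\ldots,M_{2n-1}$ are matchings of size $n$ in a bipartite graph admitting no rainbow matching of size $n$. I would take a rainbow matching $R=(e_1,\ldots,e_r)$ of maximum size, with $e_j\in M_{c_j}$ and $c_1<c_2<\cdots<c_r$; by assumption $r\le n-1$, so there are at least $2n-1-r\ge n$ unused colors. Among all maximum rainbow matchings, I would additionally choose one whose color sequence $(c_1,\ldots,c_r)$ is lexicographically smallest. This minimality plus maximality pair is the only input needed beyond the hypothesis.

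The first step is to exploit maximality: no edge of any unused $M_c$ can be disjoint from $V(R)$, or $R$ would extend. Since $|V(R)|=2r$ and $|M_c|=n$, a standard double-counting gives that each unused $M_c$ contains at least $2(n-r)$ \emph{half-incident} edges, i.e.\ edges meeting $V(R)$ in exactly one vertex; such an edge is naturally associated with the unique $e_j\in R$ whose vertex set it uses. The second step is to exploit minimality: if a half-incident edge of $M_c$ is associated with some $e_j$ and $c<c_j$, then replacing $e_j$ by that edge produces a rainbow matching of the same size $r$ with a lexicographically smaller color sequence, contradicting the choice of $R$. Hence every half-incident edge ``points forward'': if it is associated with $e_j$, then its color $c$ exceeds $c_j$.

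To close the argument I would build an auxiliary directed graph on $V(R)$, whose arcs encode half-incident edges of unused matchings together with the forward-order constraint above. The aim is to combine the lower bound of $2(n-r)$ half-incident edges per unused color (multiplied by $\ge n$ colors) against the fact that only $2r$ vertices of $V(R)$ are available to absorb these incidences, while respecting the forward-order restriction. By chasing an alternating path in this auxiliary digraph, I expect to produce either an augmenting structure that enlarges $R$ (contradicting maximality) or a compound swap that rewrites $R$ with a strictly smaller color sequence (contradicting minimality).

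The main obstacle is precisely this last chaining step: a naive one-shot inequality does not close the gap when $r=n-1$, and indeed the sharpness of the bound $2n-1$ (versus $2n-2$) indicates that the counting must be delicate. One must track how iterated swaps along alternating paths reuse and displace vertices of $V(R)$, and verify that the ``lexicographically smallest'' color sequence hypothesis constrains not just single swaps but the longer alternating paths constructed in the auxiliary digraph. Getting this bookkeeping to yield a contradiction---rather than merely a tight equality---is where the hypothesis of exactly $2n-1$ matchings of size $n$ must be squeezed to its limit.
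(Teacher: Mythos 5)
Your setup (maximality of $R$, the count of at least $2(n-r)$ half-incident edges per unused colour, and the lexicographic swap forcing half-incident edges to ``point forward'') is fine as far as it goes, but the proof is not there: the step you yourself flag as ``the main obstacle'' is the entire content of the theorem, and nothing in the proposal resolves it. The global count cannot close the argument even in principle: the total number of half-incidences is at least $2(n-r)$ times the number of unused colours, while each vertex of $V(R)$ absorbs at most one half-incident edge per unused colour, so the comparison $2(n-r)(2n-1-r)\le 2r(2n-1-r)$ only yields $r\ge n-r$, i.e.\ $r\ge n/2$ --- nowhere near a contradiction with $r=n-1$. The forward-pointing constraint from lexicographic minimality restricts single swaps, but the exchanges that actually enlarge $R$ replace several edges of $R$ at once (a symmetric difference along an alternating path) and reassign which colours represent the surviving edges of $R$; you give no mechanism by which the half-incidence bookkeeping or the lex-minimality controls such compound exchanges. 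So what is missing is precisely the augmenting device, not a refinement of the counting.

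The known combinatorial route (used in \cite{ab}, \cite{akz}, and reflected in Sections 2--3 of this paper) supplies that device and needs neither the half-incidence count nor lexicographic minimality. Take a maximum rainbow matching $R$ of size $r<n$; since $2n-1-r\ge n\ge r+1$, there are at least $r+1$ unrepresented matchings, and each, having size $n>r$, forms with $R$ an augmenting $R$-alternating path. Encode these as $s$--$t$ paths in the network whose inner vertices are the $r$ edges of $R$ (the construction in Section 3), and apply the greedy rainbow-tree argument (Proof 2 of Theorem \ref{thm: simplecase}, the case $k=1$): a maximal rainbow tree rooted at $s$ that misses $t$ represents at most $r$ unused colours, so some unrepresented unused matching provides an edge leaving the tree, a contradiction; hence there is a rainbow $s$--$t$ path, i.e.\ an augmenting alternating path whose non-$R$ edges come from distinct unused matchings. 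Taking the symmetric difference with $R$ and keeping the representatives of the untouched edges of $R$ yields a rainbow matching of size $r+1$, contradicting maximality. Your auxiliary digraph is essentially this network, so the repair is to replace the counting-plus-lex-minimality plan by this rainbow-path lemma; as written, the proposal has a genuine gap at its decisive step.
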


There is a conspicuous difference between the two theorems: in  the first the condition is ``cooperative'', namely it is on subfamilies of $\cs$, whereas in the second it is on singletons - each $S_i$ is assumed to be large by itself. On the other hand, there is a condition on  the number of the sets $S_i$. 

\subsection{A cooperative version of the Kalai-Meshulam theorem}
 A complex $\C$ is said to be $d$-{\em Leray} if $\tilde{H}_k(\C[S])=0$ for all $S \subseteq V$ and all $k \ge d$ ($\tilde{H}_k$ is the reduced $k$-th homology group). Let $\lambda(\C)$ be the smallest number $d$ such that 
$\C$ is $d$-Leray.

A basic result in this direction is a theorem of Kalai and Meshulam \cite{km}:

\begin{theorem}\label{km}
Let $\M$ and $\C$ be a matroid and a complex, respectively, on the same ground set. If $\lambda(lk_\C(S)) < rank_\M(V\setminus S)$ for every $S \in \C$ then $\M \setminus \C \neq \emptyset$.
\end{theorem}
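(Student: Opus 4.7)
The plan is to proceed by induction on $|V|$, combining matroid deletion--contraction with a Mayer--Vietoris argument. Suppose for contradiction that $\M \subseteq \C$, i.e.\ every $\M$-independent set is a face of $\C$. The base case $\rank_\M(V) = 0$ is handled directly: the hypothesis at $S = \emptyset$ then demands $\lambda(\C) < 0$, which fails for any nonempty complex, and $\C \ni \emptyset$ since $\M \subseteq \C$.

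For the inductive step, choose a non-loop $v \in V$ and consider the smaller pair $(\M/v, \lk_\C(v))$ on ground set $V \setminus \{v\}$. The containment $\M \subseteq \C$ descends: any $\M/v$-independent set $I$ extends to the $\M$-independent set $I \cup \{v\} \in \C$, giving $I \in \lk_\C(v)$. To apply the induction hypothesis, I would verify the Leray condition for this pair: for every $S \in \lk_\C(v)$,
\[
\lambda(\lk_{\lk_\C(v)}(S)) = \lambda(\lk_\C(S \cup \{v\})) \overset{?}{<} \rank_{\M/v}\bigl((V \setminus \{v\}) \setminus S\bigr) = \rank_\M(V \setminus S) - 1.
\]
The original hypothesis applied at $S \cup \{v\}$ yields $\lambda(\lk_\C(S \cup \{v\})) < \rank_\M(V \setminus S \setminus \{v\})$, which equals $\rank_\M(V \setminus S) - 1$ exactly when $v$ is not in the $\M$-closure of $V \setminus S \setminus \{v\}$. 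In this favorable regime the induction produces an $\M/v$-independent set outside $\lk_\C(v)$, contradicting the inherited containment.

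The main obstacle is the opposite regime: if $v$ is dependent on $V \setminus S \setminus \{v\}$ for some $S \in \lk_\C(v)$, the transferred bound is short by one. To handle this, I would pivot to the deletion pair $(\M - v, \C[V \setminus \{v\}])$ and use the Mayer--Vietoris long exact sequence for the decomposition $\C = \C[V \setminus \{v\}] \cup (v * \lk_\C(v))$, whose intersection is $\lk_\C(v)$ and whose second piece is contractible. That sequence transfers Leray vanishing from $\C$ and $\lk_\C(v)$ onto $\C[V \setminus \{v\}]$, so the deletion pair satisfies the induction hypothesis and in turn supplies an $\M$-independent set outside $\C$, the desired contradiction. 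The key technical point is that the strict inequality, holding at \emph{every} face $S$, is just sharp enough that whichever of the contraction or deletion routes fails at a given $S$, the other succeeds; orchestrating this dovetailing of the two matroid operations through topology is the essential insight of the Kalai--Meshulam argument.
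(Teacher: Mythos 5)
There is an important mismatch of reference points here: the paper does not prove this statement at all. Theorem \ref{km} is quoted from Kalai and Meshulam \cite{km} (as a reformulation of their Theorem 1.6), so there is no in-paper argument to compare yours against; the benchmark is the original proof, which rests on substantive homological input about matroids and Leray complexes rather than on the formal deletion--contraction bookkeeping you propose.

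And that bookkeeping has a genuine gap, located exactly at the step you defer to the last sentence. Your two transfers each work only pointwise in $S$, and in complementary regimes: the contraction pair $(\M/v,\lk_\C(v))$ inherits the hypothesis at a face $S$ only when $v\notin\mathrm{cl}_\M(V\setminus S\setminus\{v\})$, while the deletion pair $(\M-v,\C[V\setminus\{v\}])$ inherits it at $S$ only when $v\in\mathrm{cl}_\M(V\setminus S\setminus\{v\})$, i.e.\ when $\rank_\M(V\setminus S\setminus\{v\})=\rank_\M(V\setminus S)$. But the induction hypothesis has to be verified for \emph{all} faces of a \emph{single} smaller pair, and which regime holds varies with $S$; nothing in your argument produces a vertex $v$ for which one of the two pairs works uniformly, nor a mechanism for combining partial information from both. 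Moreover the pointwise dichotomy is not even exhaustive: for a face $S\in\C[V\setminus\{v\}]$ with $S\notin\lk_\C(v)$ and $v\notin\mathrm{cl}_\M(V\setminus S\setminus\{v\})$, the deletion transfer is short by one and the contraction pair does not see $S$ at all. Finally, the Mayer--Vietoris step does not repair this: the bounds $\lambda(\C[V\setminus\{v\}])\le\lambda(\C)$ and $\lambda(\lk_{\C[V\setminus\{v\}]}(S))\le\lambda(\lk_\C(S))$ are automatic, since induced subcomplexes (and induced subcomplexes of links) are again induced subcomplexes and Lerayness is defined through them; the shortfall in the unfavorable regime sits on the matroid-rank side of the inequality, which no homological statement about $\C[V\setminus\{v\}]$ can change. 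So the ``dovetailing'' you invoke is not a routine verification to be orchestrated at the end --- it is the entire content of the theorem, and as written the induction does not close.
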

Here $lk_\C(S) = \{T \subseteq V \setminus S \mid S\cup T \in \C\}$. The theorem above is a re-formulation of Theorem 1.6 in \cite{km}.

The following was proved in \cite{km2}:

\begin{theorem}\label{link}
For any complex $\C$ and set $S \in \C$, $\lambda(lk_\C(S))\le \lambda(\C)$.
\end{theorem}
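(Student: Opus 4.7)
The plan is to prove the statement first for singleton sets $S = \{v\}$ by a Mayer--Vietoris argument, and then iterate to handle arbitrary $S \in \C$.

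For the singleton case, I want to show that for every $v \in V$, $\lambda(lk_\C(v)) \le \lambda(\C) =: d$. Fix $U \subseteq V \setminus \{v\}$; since $lk_\C(v)[U] = lk_{\C[U \cup \{v\}]}(v)$, I may work inside the induced subcomplex $\C' := \C[U \cup \{v\}]$ and show $\tilde{H}_k(lk_{\C'}(v)) = 0$ for $k \ge d$. The key observation is the decomposition $\C' = del_{\C'}(v) \cup st_{\C'}(v)$, where $del_{\C'}(v) = \C[U]$ consists of faces avoiding $v$, and $st_{\C'}(v) = \{F \in \C' : F \cup \{v\} \in \C'\}$ is a cone with apex $v$, hence contractible. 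Their intersection is exactly $lk_{\C'}(v)$.

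The Mayer--Vietoris sequence then gives the exact piece
\[
\tilde{H}_{k+1}(\C[U \cup \{v\}]) \;\longrightarrow\; \tilde{H}_k(lk_\C(v)[U]) \;\longrightarrow\; \tilde{H}_k(\C[U]) \oplus \tilde{H}_k(st_{\C'}(v)).
\]
For $k \ge d$, the right-hand group vanishes because $\lambda(\C) \le d$ controls $\C[U]$ and the star is contractible, and the left-hand group vanishes by applying $\lambda(\C) \le d$ to $U \cup \{v\}$ in dimension $k+1 \ge d$. Exactness forces the middle term to vanish, giving $\lambda(lk_\C(v)) \le \lambda(\C)$.

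For general $S \in \C$, I induct on $|S|$. The base cases $|S| = 0$ (trivial) and $|S| = 1$ are handled above. For $|S| \ge 2$, pick $v \in S$; since $S \in \C$, the set $S \setminus \{v\}$ is a face of $lk_\C(v)$. The identity $lk_{lk_\C(v)}(S \setminus \{v\}) = lk_\C(S)$ (a direct check using the defining condition $T \cup S \in \C$) combined with the single-vertex bound and the induction hypothesis yields
\[
\lambda(lk_\C(S)) = \lambda\bigl(lk_{lk_\C(v)}(S \setminus \{v\})\bigr) \le \lambda(lk_\C(v)) \le \lambda(\C).
\]

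There isn't really a serious obstacle — this is a standard homological fact — but the step that requires the most care is the singleton case: one must verify that Mayer--Vietoris is applied inside every induced subcomplex $\C[U \cup \{v\}]$ (not just inside $\C$ itself), since $\lambda$ is defined via vanishing on all induced subcomplexes. Once that quantification is correctly tracked, the exact sequence delivers the conclusion immediately.
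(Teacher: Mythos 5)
Your argument is correct. Note, however, that the paper does not prove this statement at all: it is quoted as a known result of Kalai and Meshulam (the reference \cite{km2}), so there is no in-paper proof to compare against; your write-up in effect supplies the standard self-contained argument that the literature relies on. The route you take --- for a vertex $v$, decompose each induced subcomplex $\C[U\cup\{v\}]$ as the deletion $\C[U]$ union the closed star of $v$, observe that their intersection is exactly $lk_\C(v)[U]$, and run Mayer--Vietoris, then handle general $S$ by the identity $lk_{lk_\C(v)}(S\setminus\{v\})=lk_\C(S)$ and induction on $|S|$ --- is the classical proof that links of $d$-Leray complexes are $d$-Leray, and all the identifications you use ($lk_\C(v)[U]=lk_{\C[U\cup\{v\}]}(v)$, the star being a cone, the link identity) check out. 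Two small points worth making explicit: the contractibility of the star uses that it is a nonempty cone, which is where the hypothesis $S\in\C$ (hence $\{v\}\in\C$, since complexes are closed down, and likewise $S\setminus\{v\}\in lk_\C(v)$ at each inductive step) enters; and in the Mayer--Vietoris sequence for reduced homology one should use augmented chain complexes, which is harmless here since both pieces of the cover contain the empty face and only dimensions $k\ge d\ge 0$ matter. With those remarks your proof is complete and matches the standard argument behind the cited result.
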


Theorems \ref{km} and \ref{link}, combined, yield the following:

\begin{theorem}\label{partitionmatroidkm}
If $\lambda(\C)\le d$ and $\cs=(S_1, \ldots ,S_{d+k})$ is a family of  subsets of $V(\C)$  satisfying $\cs_I\not \in \C$ whenever $I\subseteq [d+k]$ is of size $k$, then there exists an $\cs$-rainbow non-$\C$ set.
\end{theorem}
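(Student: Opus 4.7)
The plan is to deduce the result from Theorem~\ref{km}, applied to a partition matroid on a disjointified ground set. Because the sets $S_i$ may overlap, I first pass to an auxiliary ground set on which the family becomes a partition.

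\textbf{Step 1 (Disjointification).} I would form $V^* = \bigsqcup_{i=1}^{d+k} S_i^*$ with $S_i^* = S_i \times \{i\}$, and let $\pi \colon V^* \to V(\C)$ be the coordinate projection $(v,i) \mapsto v$. Let $\M^*$ be the partition matroid on $V^*$ with blocks $S_1^*, \ldots, S_{d+k}^*$, each of rank one; its independent sets project under $\pi$ precisely onto the $\cs$-rainbow subsets of $V(\C)$. Define the pulled-back complex $\C^* := \{T \subseteq V^* : \pi(T) \in \C\}$, so that any $T^* \in \M^* \setminus \C^*$ will yield the desired rainbow non-$\C$ set $\pi(T^*)$.

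\textbf{Step 2 (Preservation of the Leray number).} The key technical step will be to show $\lambda(\C^*) \le \lambda(\C)$. For each $U \subseteq V^*$, the projection restricts to a simplicial map $\pi \colon \C^*[U] \to \C[\pi(U)]$. For every simplex $\sigma \in \C[\pi(U)]$, the preimage $\pi^{-1}(\sigma) \cap U$ spans a full simplex in $\C^*[U]$, because every subset of this preimage has image contained in $\sigma \in \C$. By Quillen's Theorem~A (or an equivalent fiber-contractibility / nerve-type argument), this forces $\C^*[U] \simeq \C[\pi(U)]$, so reduced homologies agree in every degree and $\C^*$ inherits $d$-Lerayness from $\C$. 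This is where the only real topology enters, and is where I expect essentially all the effort to go.

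\textbf{Step 3 (Verifying the Kalai--Meshulam hypothesis).} Fix $T \in \C^*$. Step 2 together with Theorem~\ref{link} give $\lambda(\lk_{\C^*}(T)) \le d$, so it remains to show $\rank_{\M^*}(V^* \setminus T) \ge d+1$. This rank equals the number of indices $i$ with $S_i^* \not\subseteq T$. If it were at most $d$, then some $k$-subset $I \subseteq [d+k]$ would satisfy $S_i^* \subseteq T$ for every $i \in I$, whence $\cs_I = \pi\bigl(\bigcup_{i \in I} S_i^*\bigr) \subseteq \pi(T) \in \C$, forcing $\cs_I \in \C$ by down-closedness and contradicting the hypothesis.

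\textbf{Step 4 (Conclusion).} Theorem~\ref{km} then produces an $\M^*$-independent $T^* \notin \C^*$, and $\pi(T^*)$ is the required $\cs$-rainbow set not in $\C$. The main obstacle is Step 2, the inflation-invariance of $\lambda$; once that is in hand, Step 3 is a clean combinatorial use of the cooperative hypothesis, and Step 4 is immediate.
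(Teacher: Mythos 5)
Your proposal is correct and takes essentially the same route as the paper: disjointify the sets, form the rank-one partition matroid on the disjointified ground set, and apply Theorem~\ref{km} together with Theorem~\ref{link}, verifying the rank hypothesis exactly as in your Step~3. The only difference is that you make explicit (Step~2) why vertex duplication preserves the Leray number, a point the paper's proof passes over with the phrase ``by duplicating vertices, if necessary.''
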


\begin{proof}
By duplicating vertices, if necessary (a vertex having a distinct copy for every set $S_i$ it belongs to), we may assume that the sets $S_i$ are disjoint. 
Let $\M$ be the partition matroid defined by the sets $S_i$. By Theorems \ref{link} and \ref{km} 
it suffices to show that if $S \in \C$ then $rank_\M(V \setminus S)> d$. This follows from the condition  $\cs_I \not \in \C~~(|I| \ge k)$ and the fact that $rank_\M(A)=|\{i : A \cap S_i \neq \emptyset\}|$.  
\end{proof}

This is a ``cooperative'' version of the Kalai-Meshulam theorem, namely many sets join forces to contain a set not belonging to $\C$. 

\subsection{A cooperative version of Theorem \ref{drisko}}
For a set $F$ of edges we denote by $\nu(F)$ the maximal size of a matching in $F$. For a family $\F=(F_1,\ldots ,F_m)$ of sets of edges, we denote by $\nu_R(\F)$ the maximal size of an $\F$-rainbow matching.

Let  $t$ be an integer, and let $n\le t$. Let $\C$ be the complex consisting of all $F \subseteq E(K_{t,t})$, satisfying $\nu(F)<n$. In \cite{ahj} it was shown that $\lambda(\C)$ 
$\leq 2n-2$. 
Together with Theorem \ref{partitionmatroidkm} this yields:

\begin{theorem}\label{coopdrisko}
$2n+k-2$ sets of edges in a bipartite graph, the union of any $k$ of which contains a matching of size $n$, have a rainbow matching of size $n$.
\end{theorem}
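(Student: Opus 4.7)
The plan is to directly combine the two ingredients already assembled in the excerpt: the cooperative Kalai--Meshulam statement (Theorem \ref{partitionmatroidkm}) and the Leray bound from \cite{ahj} for the ``no matching of size $n$'' complex. Everything else should reduce to translating the hypothesis and reading off the conclusion.

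First I would fix a bipartite host graph $K_{t,t}$ large enough to contain all the edges appearing in $\F=(F_1,\dots,F_{2n+k-2})$, and define the complex
\[
\C=\{F\subseteq E(K_{t,t}) : \nu(F)<n\}.
\]
This is the complex whose Leray number is bounded by $2n-2$ according to \cite{ahj}, i.e.\ $\lambda(\C)\le 2n-2$. Next I would translate the hypothesis: saying that the union of every $k$ members of $\F$ contains a matching of size $n$ is precisely the assertion that $\F_I\notin\C$ for every $I\subseteq[2n+k-2]$ with $|I|=k$.

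Now apply Theorem \ref{partitionmatroidkm} with $d=2n-2$, $V=E(K_{t,t})$, and the family $\F$ of size $d+k=2n+k-2$. The hypotheses of that theorem are exactly what has just been established, and its conclusion is the existence of an $\F$-rainbow set $R\notin\C$. The final step is to observe that $R\notin\C$ means $\nu(R)\ge n$, so $R$ contains a matching $M$ of size $n$. Because $R$ is $\F$-rainbow, the edges of $M\subseteq R$ come from pairwise distinct $F_i$'s, so $M$ itself is an $\F$-rainbow matching of size $n$.

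There is essentially no obstacle here beyond making sure the translation is correct; the heavy lifting has already been done by the Leray bound $\lambda(\C)\le 2n-2$ from \cite{ahj} and by Theorem \ref{partitionmatroidkm}. The only mild point to keep in mind is that Theorem \ref{partitionmatroidkm} is proved via a disjointification (``vertex duplication''), so when reading off the rainbow set in the original graph one must identify a vertex with its copies; this does not affect injectivity of the choice, so the resulting matching remains rainbow in the original family $\F$.
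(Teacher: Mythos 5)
Your proposal is correct and is essentially the paper's own argument: Theorem \ref{coopdrisko} is obtained exactly by combining the Leray bound $\lambda(\C)\le 2n-2$ from \cite{ahj} for the complex of edge sets with matching number below $n$ with Theorem \ref{partitionmatroidkm} (taking $d=2n-2$), and then noting that a rainbow non-$\C$ set contains a rainbow matching of size $n$. Nothing further is needed.
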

\begin{notation}
We write $(m, k, n)\to_\cb q$ for the statement ``every $m$ nonempty sets of edges in a bipartite graph, the union of every $k$ of which contains a matching of size $n$, have a rainbow matching of size $q$''.  
\end{notation}
In this notation, the theorem says that $(2n+k-2, k, n) \to_\cb n$. The case $k=1$ is Theorem \ref{drisko}. 
The main result of this paper 
is that for $k>1$ this can be improved  by $1$, thereby obtaining a sharp bound. 

\begin{theorem}\label{main}
$(2n+k-3, k,n)\to_\cb n$ whenever $1< k\le n$. 
\end{theorem}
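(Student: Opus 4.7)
My approach is to argue by contradiction, using Theorem \ref{coopdrisko} as a launching pad and then saving one set via a Drisko-style augmentation that exploits the cooperative hypothesis. Suppose $\F = (F_1, \ldots, F_{2n+k-3})$ satisfies the hypothesis but admits no rainbow matching of size $n$. Since any $k$ sets of $\F$ contain a matching of size $n$, and hence one of size $n-1$, Theorem \ref{coopdrisko} applied with parameter $n-1$ to any $2(n-1)+k-2 = 2n+k-4$ of the sets yields a rainbow matching $M = \{e_1, \ldots, e_{n-1}\}$ of size $n-1$. After relabeling, assume $e_i \in F_i$, and write $\G = (F_n, \ldots, F_{2n+k-3})$, a subfamily of size $n+k-2$. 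By maximality of $|M|$, every edge in $\bigcup \G$ meets $V(M)$.

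Next, I would fix any $k$-subset of $\G$ and use the hypothesis to produce a matching $M'$ of size $n$ in its union. Since $|V(M)| = 2(n-1)$ while the $n$ disjoint edges of $M'$ each hit $V(M)$, counting gives $2p + q \le 2n-2$ where $p$ (respectively $q$) is the number of edges of $M'$ with both (respectively exactly one) endpoint in $V(M)$; combined with $p + q = n$ this forces $p \le n-2$, so at least two edges of $M'$ are ``half-free''---with one endpoint in $V(M)$ and the other outside. Such an edge $f \in G_j$ meeting some $e_i$ is a candidate for a swap: replacing $e_i$ by $f$ gives a rainbow matching of size $n-1$ in which $F_i$ is now free and a vertex of $e_i$ has been liberated, setting the stage for a second edge from yet another $\G$-set to complete a rainbow matching of size $n$.

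The heart of the argument, and the main obstacle, is to chain such swaps into a closed augmenting configuration. I would construct an auxiliary digraph on $\{e_1, \ldots, e_{n-1}\}$ whose arcs $e_i \to e_{i'}$ encode that some edge drawn from a suitable $k$-tuple of $\G$-sets can swap out $e_i$ in a way that lands on a vertex of $e_{i'}$, and then show that, by choosing $k$-subsets of $\G$ in various combinations, one can force a closed alternating walk in this digraph. Following such a walk yields a rainbow matching of size $n$, contradicting the assumption.

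The difficulty---and the place where the promised ``$+1$'' is genuinely earned---lies in proving the existence of such a walk. For $k = 1$, each $G_j$ would individually need to contain a matching of size $n$, and the sharpness of Drisko's theorem (\Cref{drisko}) means the saving is impossible. For $k \geq 2$, the flexibility to pool $k$ sets at a time is what provides the redundant supply of ``half-free'' edges; translating this redundancy into a combinatorial cycle-closing argument, rather than a merely topological existence result, is what requires the detailed case analysis that the abstract flags as the main effort. I expect the bookkeeping of which $\G$-sets remain unused at each stage of the swap, together with the pigeonhole provided by $|\G| = n+k-2$, to be the delicate point.
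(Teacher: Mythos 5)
There is a genuine gap, and it sits exactly where the theorem's content lies. Your setup (contradiction, a maximum rainbow matching $M$ of size $n-1$, the observation that every edge of the unrepresented sets meets $V(M)$, and the counting that each cooperative union supplies at least two ``half-free'' edges) is sound, and it is morally the same first move as the paper's: the paper also takes a maximum rainbow matching and translates the unrepresented sets into a network whose $s$--$t$ paths are augmenting $M$-alternating paths. But from that point on you only describe a plan: you posit an auxiliary digraph of single-edge swaps and assert that one can ``force a closed alternating walk,'' explicitly deferring the proof of its existence. That existence claim is the whole theorem. Two half-free edges per $k$-tuple do not by themselves yield an augmenting configuration; the genuinely hard case is when every available edge acts as a ``backward'' chord along already-used alternating paths, and single-swap chaining as you describe it does not obviously terminate in a net gain of a vertex.

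What the paper does at this juncture, and what your proposal lacks, is a structure theorem for the extremal case: with $m+k-1$ edge sets in a network with $m$ inner vertices (any $k$ of whose unions contain an $s$--$t$ path), either there is a rainbow $s$--$t$ path or the family is \emph{regimented} --- the inner vertices are partitioned by internally disjoint $s$--$t$ paths, each essential set contains exactly its assigned path and nothing useful beyond backward/useless edges (Theorem~\ref{thm: regimentedpaths}, proved by induction via contracting an edge at the source, together with Lemma~\ref{backward}, Lemma~\ref{lem.onlypath} and the exchange argument of Lemma~\ref{lem.exchange}). The extra $1$ for $k>1$ is then extracted by showing one can re-choose representatives so that some inessential set contributes an edge off $M$ (Claim~\ref{nonemptyines}, again via the exchange lemma), and by combining a backward edge with a path of the regimentation to build the augmenting path-plus-cycle. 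Your proposal contains no analogue of this extremal analysis, and without it the ``cycle-closing'' step is unsupported; note also that your reliance on redundancy of half-free edges gives no mechanism for freeing the set $S_p$ that gets doubly represented after a swap, which is precisely what the backward-edge cycle correction in the paper handles.
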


The sharpness of this result, namely the fact that
$(2n+k-4,k,n)\not \to_\cb n$ for any $k$, is given by the following example. In $C_{2n}$, take the odd  edges matching   repeated $n-1$ times, the even edges matching repeated $n-2$ times, and a singleton set, consisting of an even edge, repeated $k-1$ times. Explicitly: 
\begin{example}
Consider a complete bipartite graph $K_{n,n}$ with sides  $\{a_1,a_2,\ldots,a_n\}$ and $\{b_1,b_2,\ldots,b_n\}$.
Let
\[ S_i=
\begin{cases}
\{a_1b_1,a_2b_2,\ldots,a_nb_n\} &\text{ if }i \in [n-1],\\
\{a_1b_2,a_2b_3,\ldots,a_{n-1}b_n, a_nb_1\} & \text{ if } i \in [2n-3]\setminus[n-1],\\
\{a_1b_2\} &\text{ if } i \in [2n+k-4]\setminus[2n-3].
\end{cases}
\]
\end{example}
Let $\mathcal{S}=(S_i \mid i=1,\ldots,2n+k-4)$.
Then for any $I \subseteq[2n+k-4]$ with $|I| \geq k$, $\nu(\cs_I)\geq n$, and  $\nu_R(\mathcal{S})<n$.

\begin{remark}
After our result was obtained, Holmsen and Lee \cite{seunghun-holmsen} gave a topological proof of Theorem \ref{main}, using a strong version of Theorem \ref{km}. Their result is a somewhat stronger version of Theorem \ref{main}.
\end{remark}

\subsection{Cooperative versions of Colorful Caratheodory}

Part of the motivation for  Theorem \ref{main} comes from the existence of cooperative versions of a famous rainbow result - B\'ar\'any's Colorful Caratheodory theorem \cite{Ba}. In fact, as we shall see  below (first proof of Theorem \ref{thm: simplecase}), the affinity is not merely formal. 
Theorem \ref{coopdrisko} follows from a cooperative  version of Colorful Caratheodory. 

 Wegner \cite{wegner} noted that the complex  $\C$ of sets of vectors in $\mathbb{R}^d$ not containing a given vector $v$ in their convex hull satisfies $\lambda(\C)=d$. Similarly, the complex  $\D$ of sets not containing $v$ in their cone (set of non-negative combinations) satisfies $\lambda(\D)=d-1$. This, together with Theorem \ref{partitionmatroidkm}, yields:

\begin{theorem}\label{coopboth}
Let $v \in \mathbb{R}^d$.
\begin{enumerate}
    \item If $\cs=(S_1, \ldots ,S_{d+k})$ is a family of subsets of $\mathbb{R}^d$ such that $v \in conv(\cs_K)$ for every $K \subseteq [d+k]$ of size $k$, then there exists an $\cs$-rainbow set $S$ such that $v \in conv(S)$. 
    
    \item If $\cs=(S_1, \ldots ,S_{d+k-1})$ is a family of subsets of $\mathbb{R}^d$ such that $v \in cone(\cs_K)$ for every $K \subseteq [d+k-1]$ of size $k$, then there exists an $\cs$-rainbow set $S$ such that $v \in cone(S)$. 
\end{enumerate}

\end{theorem}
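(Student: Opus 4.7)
The plan is to derive both items as immediate applications of Theorem \ref{partitionmatroidkm}, with the Wegner Leray bounds $\lambda(\C)\le d$ and $\lambda(\D)\le d-1$, recalled just above the statement, serving as the only external input.

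For part (1), I would take $\C$ to be the complex on the finite ground set $V=\bigcup_i S_i$ consisting of all $T\subseteq V$ with $v\notin\text{conv}(T)$. Wegner's bound gives $\lambda(\C)\le d$, and the hypothesis $v\in\text{conv}(\cs_K)$ for every $K\subseteq[d+k]$ of size $k$ is, by definition, the statement $\cs_K\notin\C$. Theorem \ref{partitionmatroidkm}, applied with parameter $d$ to the $d+k$ sets $S_1,\ldots,S_{d+k}$, then yields an $\cs$-rainbow non-$\C$ set $S$, i.e., a rainbow $S$ with $v\in\text{conv}(S)$.

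For part (2), I would repeat this scheme with $\D$ in place of $\C$: the complex of finite $T$ with $v\notin\text{cone}(T)$. Wegner's bound $\lambda(\D)\le d-1$ then allows Theorem \ref{partitionmatroidkm} to be applied with parameter $d-1$, and hence with $(d-1)+k=d+k-1$ sets, exactly matching the hypothesis. The conclusion is an $\cs$-rainbow set $S$ with $v\in\text{cone}(S)$.

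There is essentially no obstacle once the Wegner bounds are accepted; both assertions are pure translations of Theorem \ref{partitionmatroidkm} into the language of convex and conic hulls. The only mild check is that the ground set may be taken finite (by restricting to the finite union $\bigcup_i S_i$), so that the Leray condition applies as stated, and that duplicating vertices to disjointify the $S_i$ in the proof of Theorem \ref{partitionmatroidkm} does not affect whether $v$ lies in the convex or conic hull of a rainbow set.
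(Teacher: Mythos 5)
Your proposal is correct and follows essentially the same route as the paper: the paper also obtains both parts by feeding Wegner's Leray bounds $\lambda(\C)\le d$ for the ``$v\notin\mathrm{conv}$'' complex and $\lambda(\D)\le d-1$ for the ``$v\notin\mathrm{cone}$'' complex into Theorem \ref{partitionmatroidkm}, with $d+k$ and $d+k-1$ sets respectively. Your added remarks about restricting to a finite ground set and about vertex duplication not affecting membership in convex or conic hulls are harmless sanity checks that the paper leaves implicit.
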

The case $k=2$ of part (1) of the theorem was strengthened by Holmsen-Pach-Tverberg \cite{hpt} and Arocha et.al. \cite{abbfm}:

\begin{theorem}\label{coopbar}
If $S_1, \ldots ,S_{d+1}$ are non-empty sets in $\mathbb{R}^d$, and $v \in conv(S_i \cup S_j)$ whenever $1\le i<j\le d+1$, then there is a rainbow set $S$ with $v \in conv(S)$.
\end{theorem}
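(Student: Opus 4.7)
The plan is to reduce to a finite setting (each $S_i$ finite, $v = 0$ by translation, and no $S_i$ containing $0$, else the singleton $\{0\}$ is a trivial rainbow witness) and then induct on the dimension $d$.

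The base case $d = 1$ is elementary: the pair condition $0 \in \operatorname{conv}(S_1 \cup S_2)$ with $0 \notin S_1 \cup S_2$ places points of $S_1 \cup S_2$ on both sides of $0$, and a short case analysis picks a rainbow pair straddling $0$ (either one point from each set with opposite signs, or, if one set has both signs, it pairs with any point of the other set on the opposite sign).

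For the inductive step with sets $S_1, \ldots, S_{d+1}$ in $\mathbb{R}^d$, the first move is to choose $p_i \in S_i$ of minimum norm. If $0 \in \operatorname{conv}\{p_1, \ldots, p_{d+1}\}$ we are done, so assume a hyperplane $H$ through $0$ strictly separates these $p_i$ from $0$, all lying in the open half-space $H^+$. The pair condition then forces, for every pair $(i, j)$, some ``far'' point of $S_i \cup S_j$ to lie in $H^- \cup H$. Hence the set $B$ of indices $i$ for which $S_i$ meets $H^- \cup H$ has $|B| \ge d$, since its complement can contain no pair. After radially projecting these far points onto a suitable $(d-1)$-dimensional cross-section through $0$, the plan is to invoke the inductive hypothesis on at most $d$ projected sets in $\mathbb{R}^{d-1}$, and then lift the resulting rainbow witness back to the original configuration using the fact that the far points and their $p_i$-counterparts span $0$ along the normal to $H$.

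The main obstacle is verifying that the inductive hypothesis is applicable after the reduction: one needs the projected pair condition $0 \in \operatorname{conv}(\bar S_i \cup \bar S_j)$ to hold in $\mathbb{R}^{d-1}$ for each pair $i, j \in B$. Translating the original pair condition through the separating-hyperplane geometry is delicate, since a pair may be witnessed in the original $\mathbb{R}^d$ by $p_i \in H^+$ together with a far point of $S_j$, and this mixed witness need not survive projection. This is the heart of the matter in the proofs of Holmsen--Pach--Tverberg \cite{hpt} and Arocha et al.~\cite{abbfm}; a clean execution may require either refining Wegner's bound used in Theorem~\ref{coopboth} via an octahedral-lemma argument, or a dedicated combinatorial case analysis on which $p_i$ can be ``pushed'' down to $H$ (by replacing the representative $p_i$ with a far point) without destroying the pair condition for the remaining indices.
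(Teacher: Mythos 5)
Your write-up is a plan, not a proof, and its decisive step is missing — as you acknowledge yourself: you never establish that the projected sets satisfy the pairwise condition in $\mathbb{R}^{d-1}$, and the implication you would need is simply false as stated. The hypothesis $0\in conv(S_i\cup S_j)$ can be witnessed entirely by points of the open half-space $H^+$ together with a single ``far'' point, in which case the far points of $S_i\cup S_j$ lie strictly to one side of the origin inside the lower region and no projection of them alone can capture $0$. Concretely, in $\mathbb{R}^2$ take $S_i\supseteq\{(1,1),(-1,1)\}$ and let the only far point of $S_j$ be $(0.1,-1)$: then $0\in conv(S_i\cup S_j)$, yet after passing to $H=\{y=0\}$ the pair $(i,j)$ contributes only the single projected point $(0.1,0)$, so the inductive hypothesis cannot be invoked. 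Two further steps are also only gestured at: the ``radial projection onto a cross-section through $0$'' is not even well defined for points strictly below a hyperplane through $0$, and the lifting step must combine a $(d-1)$-dimensional rainbow witness made of far points with points of $H^+$ while still using at most one point from each $S_i$ (note the far point and the minimal-norm point $p_i$ may come from the same set). So the heart of the argument — exactly where the improvement from $d+2$ sets (Theorem \ref{coopboth}(1) with $k=2$) to $d+1$ sets is earned — remains open.

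For comparison, the paper itself does not prove Theorem \ref{coopbar}; it quotes it from Holmsen--Pach--Tverberg \cite{hpt} and Arocha et al.\ \cite{abbfm}, and records Holmsen's topological proof \cite{holmsen}, which rests on showing the relevant complex is ``near $d$-Leray'' (every link of a non-empty face is $d$-Leray). That is strictly finer information than Wegner's bound $\lambda(\C)=d$, which via Theorem \ref{partitionmatroidkm} only yields the weaker $(d+2)$-set statement; hence no argument that merely re-uses the machinery already in the paper can close your gap. Your fallback suggestions (an octahedral-lemma refinement, or a case analysis on which $p_i$ can be exchanged for far points) point toward the actual known proofs, but since none of them is carried out, the proposal as it stands does not prove the theorem.
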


Holmsen \cite{holmsen} gave a topological proof of this result, using a notion he called ``near $d$-Lerayness'', which means that $lk_\C(S)$ is $d$-Leray for every non-empty $S \in \C$. The same argument can be used to prove the analogous strengthening for all $k>1$:

\begin{theorem}
Let $k>1$, and let $\cs=(S_1, \ldots ,S_{d+k-1})$ be a family of non-empty sets in $\mathbb{R}^d$,  such that every $k$
of them contain $v$ in the convex hull of their union. Then there is an $\cs$-rainbow set containing $v$ in its convex hull. 
\end{theorem}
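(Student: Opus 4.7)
My plan is to mimic Holmsen's topological proof of the $k=2$ case (Theorem \ref{coopbar}), which the paragraph above indicates generalizes. First I would recast the problem in the rainbow--matroid framework: let $V = \bigsqcup_{i=1}^{d+k-1} S_i$ (duplicating any vertex lying in several $S_i$), let $\M$ be the partition matroid defined by the parts $S_i$, and let
\[
\C = \{T \subseteq V : v \notin conv(T)\}.
\]
An $\cs$-rainbow set containing $v$ in its convex hull is exactly a rainbow independent set of $\M$ that does not belong to $\C$, so the goal is to show $\M \setminus \C \neq \emptyset$.

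The central topological input is the \emph{near $d$-Lerayness} of $\C$: the assertion that $\lk_\C(T)$ is $d$-Leray for every non-empty $T \in \C$. Wegner's theorem already gives $d$-Lerayness of $\C$ itself, but this conditional strengthening is precisely what allows us to trim one set from the count $d+k$ required by Theorem \ref{partitionmatroidkm}. I would prove it by fixing a basepoint $t \in T$ and carrying out a geometric reduction --- for instance a radial projection from $v$ through $t$ onto an auxiliary hyperplane --- that identifies $\lk_\C(T)$ up to a homologically inert modification with a convex hull complex in $\mathbb{R}^d$, to which Wegner's bound applies directly. The delicate step is verifying that conditional non-containment of $v$ in $conv(T \cup U)$ transports coherently along this reduction.

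With near $d$-Lerayness in hand, I would then invoke the near-Leray variant of the Kalai--Meshulam Theorem \ref{km} that is implicit in Holmsen's $k=2$ argument: given a near $d$-Leray complex $\C$ on a ground set partitioned into non-empty parts $S_1, \ldots, S_{d+k-1}$, and the $k$-wise hypothesis $\cs_I \notin \C$ whenever $|I|=k$, one still concludes $\M \setminus \C \neq \emptyset$. The non-emptiness of every $S_i$ is used to circumvent the fact that near-Lerayness says nothing about $\lk_\C(\emptyset) = \C$ itself (one can ``seed'' the argument at a non-empty face). For non-empty $S \in \C$, the rank verification is routine: setting $J(S) = \{i : S_i \subseteq S\}$, the containment $\cs_{J(S)} \subseteq S \in \C$ forces $|J(S)| \leq k-1$ by the $k$-wise hypothesis, giving $\rank_\M(V \setminus S) = (d+k-1) - |J(S)| \geq d \geq \lambda(\lk_\C(S))$, which the near-Leray KM machinery upgrades to the conclusion.

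The principal obstacle is establishing near $d$-Lerayness of the convex-hull complex; once this geometric ingredient is secured, the remainder is a formal extension of Holmsen's $k=2$ argument to the general $k$-wise cooperative condition.
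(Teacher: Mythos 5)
Your overall architecture --- the partition matroid $\M$, the complex $\C=\{T : v\notin conv(T)\}$, and Holmsen's topological machinery --- is exactly the route the paper points to, and the paper itself gives no more detail than the assertion that ``the same argument can be used'' for $k>1$. However, your proposal mislocates where the content lies. Near $d$-Lerayness, as you (and the paper) define it, says that $\lk_\C(T)$ is $d$-Leray for every \emph{non-empty} face $T$; since Wegner \cite{wegner} gives $\lambda(\C)\le d$ and Theorem \ref{link} gives $\lambda(\lk_\C(T))\le\lambda(\C)$, this property is automatic and needs no radial-projection argument. It is a weakening of $d$-Lerayness, not a ``conditional strengthening,'' and by itself it cannot trim the count from $d+k$ to $d+k-1$. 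Your own rank computation shows the obstruction: if $S\in\C$ happens to contain $k-1$ whole parts (which the $k$-wise hypothesis does not forbid), then $\rank_\M(V\setminus S)=(d+k-1)-(k-1)=d$, while all you know is $\lambda(\lk_\C(S))\le d$, so the strict inequality required by the Kalai--Meshulam Theorem \ref{km} fails precisely at these borderline faces. That failure is the entire difficulty in improving Theorem \ref{coopboth}(1).

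Consequently everything rests on the ``near-Leray variant of Kalai--Meshulam'' you invoke as ``implicit in Holmsen's $k=2$ argument'': a statement deducing $\M\setminus\C\neq\emptyset$ from near $d$-Lerayness, non-emptiness of the $d+k-1$ parts, and the $k$-wise condition alone. But that abstract statement is, in matroid form, exactly the theorem to be proved; Holmsen \cite{holmsen} establishes it only for $k=2$, and the substance of the present claim is that his homological argument extends to all $k>1$ --- in particular that it can absorb the faces with $\rank_\M(V\setminus S)=d$ by exploiting the non-emptiness of the parts. You assume this extension as a black box while directing the proposed work (the geometric reduction) at an ingredient that is already free, so as written the proposal has a genuine gap: to close it you must state and prove the strengthened matroid--complex intersection theorem for general $k$ (this is the kind of strong version of Theorem \ref{km} supplied in \cite{seunghun-holmsen}), or otherwise show how Holmsen's argument handles the borderline faces; without that, the argument yields nothing beyond the $d+k$ bound of Theorem \ref{coopboth}.
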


The analogous strengthening of part (2) of Theorem \ref{coopboth} is false, as witnessed by simple counterexamples.

\begin{example}
Let $v_1, \ldots, v_{d+1}$ be the vertices of a $d$-dimensional simplex $\sigma \subseteq \mathbb{R}^d$ whose barycenter is the origin. Let $v$ be the barycenter of face $\{v_1, \ldots, v_d\}$ of $\sigma$. Consider the family $\cs = (S_1,\ldots,S_{d+k-2})$ of non-empty sets in $\mathbb{R}^d$, where $S_i = \{v_1, \ldots, v_d\}$ for $1 \leq i \leq d-1$ and $S_j = \{v_{d+1}\}$ for $d \leq j \leq d+k-2$. Among any $k$ sets in $\cs$, at least one is $S_i$ for some $1 \leq i \leq d-1$, hence the convex cone spanned by their union contains $v$. However, there is no $\cs$-rainbow set $S$ such that $v \in cone(S)$.
\end{example}

\section{Rainbow paths}
The proof of Theorem \ref{main} is based on a combinatorial proof of the result $(2n+k-2, k, n) \to_\cb n$, and analysis of the extreme case. This proof, in turn, uses a lemma on rainbow paths in networks. To get the extra $1$ we analyze the extreme cases of that lemma. The analysis uses ideas from an  analogous lemma in \cite{akz}, which is the case $k=1$. But apart from a higher level of complexity, there is the difference that for $k>1$ the analysis leads to an improvement of $1$ in the theorem - which was not the case for $k=1$.  

A {\em network} is a triple $\N=(D,s,t)$, where $D$ is a digraph, and $s,t$ are two special vertices in it, called {\em source} and {\em target}. We assume that there are no edges going out of $t$ or into $s$.
We write $V(\N)$ for $V(D)$. The set $V(\N) \setminus \{s,t\}$ is denoted by $V^\circ(\N)$, and its elements are called  ``inner vertices''. For an $s-t$ path $P$ let  $\vc(P)=\vc(\N) \cap V(P)$. 
Two $s-t$ paths $P,Q$ are said to be {\em internally disjoint} if $V^\circ(P) \cap \vc(Q)=\emptyset$. 

For an $s-t$ path $Q$ let $B(Q)$ be the set of backward edges on $Q$, namely those directed edges $pq$ where $p,q \in V(Q)$ and $q$ precedes $p$ on $Q$.
Let $s_Q$ be the vertex   following $s$ in $Q$, and $t_Q$  the vertex  preceding $t$ in $Q$. Define $U(Q)=\{v s_Q \mid v \in V^\circ(\N) \setminus V(Q) \} \cup \{t_Q u \mid u \in  V^\circ(\N) \setminus V(Q)\}$. (``$U$" stands for ``useless'', since such edges cannot be used as shortcuts - this will be clarified below). 

We shall borrow a term - ``regimented'' - from \cite{akz}, but its use is a bit different here.  

\begin{definition}\label{reg}
Let  $\F$ be a family  of sets of edges in $\N$. A {\em regimentation} of $\F$ is a pair $\R=(\Q=\Q(\R),I=I(\R))$, where $\Q$ is a set of internally disjoint $s-t$ paths, and $I$ is a function from a subset $\es=\es(\R)$ of $\F$ (the ``essential'' sets)  onto $\Q$, satisfying the following conditions:  

\begin{enumerate}
\item $\bigcup_{Q \in \Q}V(Q)=V(\N)$,

\item 
$ E(I(F)) \subseteq F$ for every $F \in \es$, and 

\item $|I^{-1}(Q)|=|E(Q)|-1$ for every $Q\in \Q$.
\end{enumerate}

\end{definition}

Let $\ines(\R)=\F\setminus \es(\R)$ (the ``inessential'' sets) and  
 $B(\R)=\bigcup_{Q\in \Q}B(Q)$.

If such a regimentation $\R$ exists, we say then that $\F$ is regimented by $\R$.

Conditions (1) and (3) imply: 
\begin{lemma}\label{counting}
$|\es(\R)|=|V^{\circ}(\N)|$. 
\end{lemma}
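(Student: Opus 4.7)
The plan is to prove this by a straightforward double-counting argument, partitioning $V^\circ(\N)$ according to which path in $\Q$ each inner vertex lies on. The essential observation is that conditions (1) and (3) of Definition \ref{reg} are calibrated precisely so that each path $Q \in \Q$ contributes the same count to both sides: $|I^{-1}(Q)|$ on the left, and the number of inner vertices lying on $Q$ on the right.

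First I would note that since $I$ is a function from $\es(\R)$ onto $\Q$, we have
\[
|\es(\R)| \;=\; \sum_{Q \in \Q} |I^{-1}(Q)| \;=\; \sum_{Q \in \Q} \bigl(|E(Q)| - 1\bigr),
\]
where the second equality uses condition (3). Next, for each $s$-$t$ path $Q$, the number of edges is $|E(Q)| = |V(Q)| - 1$, and since $s,t \in V(Q)$ while neither lies in $V^\circ(\N)$, we have $|E(Q)| - 1 = |V(Q)| - 2 = |V^\circ(\N) \cap V(Q)|$.

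Finally, I would invoke the fact that the paths in $\Q$ are internally disjoint (part of the definition of a regimentation) and that, by condition (1), their union covers $V(\N)$. Consequently the sets $V^\circ(\N) \cap V(Q)$ for $Q \in \Q$ form a partition of $V^\circ(\N)$, so
\[
\sum_{Q \in \Q} |V^\circ(\N) \cap V(Q)| \;=\; |V^\circ(\N)|,
\]
and combining the displayed identities gives $|\es(\R)| = |V^\circ(\N)|$. There is no real obstacle here; the lemma is essentially a bookkeeping check that the three conditions of Definition \ref{reg} are consistent, and the only subtle point to verify carefully is that $s$ and $t$ are excluded from $V^\circ(\N)$ exactly so the ``$-2$'' on each path accounts for precisely those two vertices.
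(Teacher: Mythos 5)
Your proposal is correct and follows essentially the same argument as the paper: decompose $|\es(\R)|$ as $\sum_{Q\in\Q}|I^{-1}(Q)|$, apply condition (3) to rewrite each summand as $|E(Q)|-1=|V^{\circ}(Q)|$, and then use internal disjointness together with condition (1) to conclude that these interiors partition $V^{\circ}(\N)$. No discrepancies to report.
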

\begin{proof}
Since $\es(\R)=\bigcup_{Q \in \Q} I^{-1}(Q)$, we have $|\es(\R)|=\sum_{Q \in \Q} |I^{-1}(Q)|$. Then by the condition (3) of a regimentation, we have
$$|\es(\R)|=\sum_{Q \in \Q} |I^{-1}(Q)|=\sum_{Q \in \Q} (|E(Q)|-1)=\sum_{Q \in \Q}|V^{\circ}(Q)|.$$
Since $\Q$ is a set of internally disjoint $s-t$ paths, the condition (1) of a regimentation implies $\sum_{Q \in \Q} |V^{\circ}(Q)|=|V^{\circ}(\N)|$, and hence we obtain $|\es(\R)|=|V^{\circ}(\N)|$.
\end{proof}

\begin{notation}[Pruning and concatenation of paths]\label{concatenatingpaths}
If $P$ is a directed path and $x \in V(P)$ then $Px$ is the part of $P$ up to and including $x$, and $xP$ is the part of $P$ starting at $x$. If two paths $P$ and $Q$ meet at a vertex $x$, then $PxQ$ denotes the walk obtained by concatenating $Px$ and $xQ$. 
If the endpoint of a path $P$ coincides with the initial point in a path $Q$, we write $PQ$ for the walk that is the concatenation of $P$ and $Q$. 
\end{notation}

\begin{lemma}\label{backward}
Suppose $\F$ is regimented by $\R=(\Q,I)$, and let $B=B(\R),  \ines=\ines(\R)$. If there is no $\F$-rainbow $s-t$ path, then $\bigcup   \ines \subseteq B$ and 
$\bigcup  I^{-1}(Q)\subseteq E(Q) \cup B \cup U(Q)$ for every $Q \in \Q$.
\end{lemma}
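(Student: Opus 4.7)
The plan is to prove both inclusions by contrapositive: given an edge $e = xy$ violating the relevant conclusion, I construct an $\F$-rainbow $s$-$t$ path, contradicting the hypothesis. Since $s$ has no in-edges and $t$ has no out-edges in $D$, note $x \neq t$ and $y \neq s$. Let $Q_1 \in \Q$ be the unique path containing $x$ when $x$ is inner, with $sQ_1 x$ taken to be trivial if $x = s$; define $Q_2$ and $yQ_2 t$ analogously at the other end. The candidate walk is $P := (sQ_1 x)\,e\,(yQ_2 t)$.

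The first task is to check that $P$ is a simple directed $s$-$t$ path. When $Q_1 \neq Q_2$ this is immediate from the internal disjointness of members of $\Q$. When $Q_1 = Q_2$, simplicity is equivalent to $y$ lying strictly after $x$ on this shared path, which is exactly $e \notin B(\R)$.

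Next, I would rainbow-label $E(P)$: assign $F$ to $e$, inject $E(sQ_1 x)$ into $I^{-1}(Q_1)$, and inject $E(yQ_2 t)$ into $I^{-1}(Q_2)$, with disjoint images when $Q_1 = Q_2$ and avoiding $F$ if it lies in the relevant preimage. Each $F' \in I^{-1}(Q_i)$ contains $E(Q_i)$ by Definition \ref{reg}(2), so the membership condition is automatic once the injections exist; and $|I^{-1}(Q_i)| = |E(Q_i)| - 1$ by Definition \ref{reg}(3). Hence enough essential sets are available exactly when the portions of $Q_1, Q_2$ traversed in $P$ have length at most $|E(Q_i)| - 1$ on each component (further decreased by one if $F$ is drawn from the corresponding preimage).

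The crux and main obstacle is verifying that these counts never fail. When $F \in \ines(\R)$ there is no competition from $I^{-1}(Q_1) \cup I^{-1}(Q_2)$, and the only required inequality---$y$ strictly after $x$ on $Q_1 = Q_2$---reduces to $e \notin B(\R)$. When $F \in I^{-1}(Q)$, losing one set from $I^{-1}(Q)$ is fatal in exactly three extremal configurations, each of which is ruled out by the hypothesis $e \notin E(Q) \cup B(\R) \cup U(Q)$: (a) $Q_1 = Q_2 = Q$ with $y$ the immediate successor of $x$ on $Q$, forcing $e \in E(Q)$; (b) $Q_1 = Q$ with $x = t_Q$ and $y$ inner outside $V(Q)$, forcing $e \in U(Q)$; (c) $Q_2 = Q$ with $y = s_Q$ and $x$ inner outside $V(Q)$, again forcing $e \in U(Q)$. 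In every remaining configuration, the surviving essential sets suffice to rainbow-label $P$, producing the desired contradiction and establishing both inclusions simultaneously.
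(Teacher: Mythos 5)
Your proposal is correct and follows essentially the same route as the paper: concatenate $sQ_1x$, the edge $e$, and $yQ_2t$, count available representatives via conditions (2) and (3) of Definition \ref{reg}, and observe that the count fails only when $e\in B$, or $F\in I^{-1}(Q)$ with $e\in E(Q)\cup U(Q)$. The only looseness is in the degenerate endpoints $x=s$ or $y=t$ (where the forced conclusion is $e\in E(Q)$ rather than literally one of your configurations (a)--(c)), but the paper glosses these the same way and they cause no difficulty.
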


(For a set $\K$ of sets $\bigcup \K$ is the union of all sets  in $\K$.) 

\begin{proof}
Let $vu$ be an edge belonging to $F$ for some $F \in \F$.
Assume that $v \in V(Q_1), ~~u \in V(Q_2)$. Let $P=Q_1vuQ_2$ (see Notation \ref{concatenatingpaths}).

To obtain the conclusion of the lemma, we will show the following.
\begin{enumerate}
\item When $Q_1=Q_2$, $P$ is an $\F$-rainbow $s-t$ path unless $vu \in B(Q_1)$ or $vu \in E(Q_1)$ and $F \in I^{-1}(Q_1)$.
\item When $Q_1 \ne Q_2$, $P$ is an $\F$-rainbow $s-t$ path unless $v=t_{Q_1}$ and $F \in I^{-1}(Q_1)$, or $u=s_{Q_2}$ and $F \in I^{-1}(Q_2)$.
\end{enumerate}

First suppose that $Q_1=Q_2$.
If $v$ precedes $u$ on $Q_1$ and $vu \notin E(Q_1)$, then $P$ is an $\F$-rainbow $s-t$ path, since by part (3) of Definition \ref{reg} it has enough represented sets for its length.
If $vu \in E(Q_1)$, then $P$ is an $\F$-rainbow $s-t$ path unless $F \in I^{-1}(Q_1)$.
This proves (1).

Now assume $Q_1 \ne Q_2$.
 We may assume that $v \in V^{\circ}(Q_1)$ and $u \in V^{\circ}(Q_2)$ since if not 
the claim is a special case of (1). 
Then $Q_1v$ and $uQ_2$ are rainbow, and they have enough represented sets in $I^{-1}(Q_1)$ and $I^{-1}(Q_2)$, respectively.
If $F \notin I^{-1}(Q_1) \cup I^{-1}(Q_2)$, then $P$ is rainbow.
If $F \in I^{-1}(Q_1)$ and $v \ne t_{Q_1}$, then $Q_1vu$ is rainbow since it has enough represented sets in $I^{-1}(Q_1)$, since it has length at most $|E(Q_1)|-1$.
Similarly if $F \in I^{-1}(Q_2)$ and $u \ne s_{Q_2}$, then $vuQ_2$ is rainbow since it has enough represented sets in $I^{-1}(Q_2)$.
In both cases $P$ is rainbow, which  proves (2).

Since we assume there is no $\F$-rainbow $s-t$ path, if $F \in \ines$, then $vu \in B$ by (1) and (2).
Thus $\bigcup \ines \subseteq B$.
If $F \in I^{-1}(Q)$ for some $Q \in \Q$, then $vu \in E(Q) \cup B \cup U(Q)$ by (1) and (2).
Thus $\bigcup I^{-1}(Q) \subseteq E(Q) \cup B \cup U(Q)$.
\end{proof}

\begin{corollary}\label{nopath}
Let $\F$ be regimented by $\R$, and assume that there is no $\F$-rainbow $s-t$ path.
If $F \in \ines(\R)$ then $F$ does not contain an $s-t$ path. 
\end{corollary}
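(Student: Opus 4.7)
The plan is to observe that once we have Lemma \ref{backward}, the corollary is almost immediate: inessential sets live inside $B(\R)=\bigcup_{Q\in\Q}B(Q)$, so every edge appearing in an inessential $F$ is a backward edge of some $Q\in\Q$. I would then exploit the trivial fact that the source $s$ cannot appear as the terminal endpoint of a backward edge, since $s$ is the first vertex on every $s$--$t$ path in $\Q$.

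Concretely, I would argue by contradiction. Suppose $F\in\ines(\R)$ contains an $s$--$t$ directed path $P$, and let $sv$ be its first edge. By Lemma \ref{backward}, $sv\in B(\R)$, so $sv\in B(Q)$ for some $Q\in\Q$. Unwinding the definition of $B(Q)$, this means $s,v\in V(Q)$ and $v$ precedes $s$ on $Q$. But $Q$ is an $s$--$t$ path, so $s$ is its initial vertex and nothing precedes $s$ on $Q$. This contradiction yields the claim.

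The main obstacle was already resolved inside Lemma \ref{backward}; once we know $\bigcup\ines \subseteq B$, the corollary reduces to a one-line observation about which edges are eligible to touch $s$. One could alternatively inspect the \emph{last} edge $wt$ of the hypothetical $s$--$t$ path and use the symmetric fact that $t$ cannot appear as the initial endpoint of a backward edge, since $t$ is the last vertex on every $Q\in\Q$; either endpoint argument suffices, and no induction or case analysis on the internal structure of $P$ is required.
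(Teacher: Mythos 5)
Your argument is correct and is exactly the one the paper intends: the corollary is stated without proof right after Lemma \ref{backward}, with the remark that ``in fact, $F$ does not even contain an edge $sy$,'' which is precisely your observation that a backward edge cannot leave $s$ since nothing precedes $s$ on any $Q\in\Q$. Nothing further is needed.
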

In fact, $F$ does not even contain an edge $sy$.

\begin{lemma}\label{lem.onlypath}
Let  $P, Q$ be $s-t$ paths in a network $(D,s,t)$. If $E(P) \subseteq E(Q) \cup B(Q) \cup \tb \cup U(Q)$ for some collection $\tb$ of edges that are  vertex-disjoint from $Q$, then $P=Q$.
\end{lemma}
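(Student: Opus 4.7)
The plan is to trace $P$ from $s$ and show inductively that it must coincide with $Q$ vertex-by-vertex, with the only potentially branching step happening at $t_Q$.

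First I would analyze the first edge of $P$. Since it leaves $s \in V(Q)$, it cannot lie in $\tb$ (which is vertex-disjoint from $V(Q)$) or in $B(Q)$ (no vertex precedes $s$ on $Q$); and edges of $U(Q)$ starting inside $V(Q)$ originate only at $t_Q$, whereas $s \neq t_Q$ in any case of interest (i.e.\ as soon as $Q$ has an inner vertex). Thus the first edge of $P$ lies in $E(Q)$ and must be the unique $Q$-edge out of $s$, namely $s\,s_Q$.

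For the inductive step, suppose $P$ has traced $Q$ up to some vertex $w \in \vc(Q) \setminus \{t_Q\}$. The next edge of $P$ leaves $w \in V(Q)$, so again it cannot lie in $\tb$; a $B(Q)$ edge out of $w$ ends at a vertex strictly earlier on $Q$ that $P$ has already visited, forbidden since $P$ is a path; and a $U(Q)$ edge out of $V(Q)$ must originate at $t_Q \neq w$. Hence the next edge lies in $E(Q)$, and the unique edge of $Q$ leaving $w$ forces $P$ and $Q$ to agree one step further. Iterating, $P$ follows $Q$ all the way to $t_Q$.

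The main obstacle---and the only nontrivial case---is to rule out deviation at $t_Q$. The allowed edges there are $t_Q t \in E(Q)$ (in which case $P = Q$ and we are done) or some $t_Q u \in U(Q)$ with $u \in \vc(\N) \setminus V(Q)$. I would dispose of the second possibility by first observing that no edge of $E(Q) \cup B(Q) \cup \tb \cup U(Q)$ ends at $t$ other than $t_Q t$: backward edges of $Q$ cannot terminate at $t$ since nothing follows $t$ on $Q$; edges of $U(Q)$ terminate at $s_Q \neq t$ or at an inner vertex outside $V(Q)$; and $\tb$ avoids $V(Q) \ni t$. Now after taking $t_Q u$, the only edges available at $u$ lie in $\tb$ (which stays outside $V(Q)$) or are of the form $u\,s_Q \in U(Q)$; but $s_Q$ has already been visited by $P$. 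So from $u$ the path $P$ is confined to the $\tb$-region, vertex-disjoint from $V(Q)$, and can never reach $t$, contradicting that $P$ is an $s$-$t$ path. Therefore $P$ must take $t_Q t$ at $t_Q$, and $P = Q$.
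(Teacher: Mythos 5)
Your argument is correct and uses essentially the same ingredients as the paper's proof: the only allowed edge out of $s$ is $ss_Q$, the only allowed edge into $t$ is $t_Qt$, an edge of $B(Q)$ would force $P$ to revisit a vertex, and $\tilde{B}$ is vertex-disjoint from $Q$. The difference is only organizational --- you trace $P$ vertex by vertex and kill the possible $U(Q)$-deviation at $t_Q$ by showing $P$ could never reach $t$ afterwards, whereas the paper pins down the first and last edges of $P$ and then excludes $U(Q)$, $\tilde{B}$ and $B(Q)$ in turn --- and your implicit assumption that $Q$ has an inner vertex is equally implicit in the paper's argument.
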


\begin{proof}
    The only edge leaving $s$ in $E(Q)\cup B(Q) \cup \tb \cup U(Q)$ is $ss_Q \in E(Q)$, and the only edge to $t$ is $t_Q t \in E(Q)$. So these are necessarily the first and last edges of $P$. Therefore $P$ has no edges from $U(Q)$, since the in-degree of $s_Q$  and the out-degree of $t_Q$ 
    in $P$ are $1$. 
    
    As $E(Q) \cup B(Q)$ and $\tb$ are disconnected, $E(P) \cap \tb=\emptyset$. It remains to show that $E(P) \cap B(Q)=\emptyset$, which  follows from the fact that $P$ does not repeat vertices. 
\end{proof}

 Combining Lemmas \ref{backward} and \ref{lem.onlypath} yields:
 
 \begin{corollary}\label{onlypath}
 Let $\F$ be regimented by $\R$, and having no rainbow $s-t$ path.
 If $F \in \es(\R)$ then $I(F)$ is the only $s-t$ path contained in $F$.
\end{corollary}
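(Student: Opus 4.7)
The plan is to deduce Corollary~\ref{onlypath} as an almost immediate consequence of Lemmas~\ref{backward} and~\ref{lem.onlypath}. Given $F \in \es(\R)$, set $Q = I(F)$. First I would note that $Q$ itself is an $s$--$t$ path contained in $F$: by part (2) of Definition~\ref{reg}, $E(Q) = E(I(F)) \subseteq F$. So the content of the corollary is uniqueness, not existence.

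For uniqueness, I would invoke Lemma~\ref{backward} (which applies because no $\F$-rainbow $s$--$t$ path exists) to obtain
\[ F \subseteq \bigcup I^{-1}(Q) \subseteq E(Q) \cup B(\R) \cup U(Q), \]
where $B(\R) = \bigcup_{Q' \in \Q} B(Q')$. To apply Lemma~\ref{lem.onlypath} with path $Q$, I would split $B(\R) = B(Q) \cup \tilde B$, where $\tilde B = \bigcup_{Q' \in \Q,\; Q' \neq Q} B(Q')$, and verify that $\tilde B$ is vertex-disjoint from $Q$. This reduces to the observation that every backward edge on $Q'$ has both endpoints in $V^\circ(Q')$ (since the network has no edges into $s$ or out of $t$, a backward endpoint cannot be $s$ or $t$), and the paths in $\Q$ are internally disjoint, so $V^\circ(Q') \cap V(Q) = \emptyset$ for $Q' \neq Q$.

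With that verification in hand, Lemma~\ref{lem.onlypath} applied to any $s$--$t$ path $P \subseteq F$ and to $Q$ together with $\tilde B$ forces $P = Q$, which is exactly the desired uniqueness.

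I do not anticipate a real obstacle here; the only subtle point is the disjointness check for $\tilde B$, which depends on carefully reading off from the definition that backward edges cannot touch $s$ or $t$. Everything else is bookkeeping that repackages Lemma~\ref{backward} into the hypothesis format demanded by Lemma~\ref{lem.onlypath}.
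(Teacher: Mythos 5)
Your proposal is correct and is exactly the paper's argument: the paper derives Corollary~\ref{onlypath} by combining Lemma~\ref{backward} with Lemma~\ref{lem.onlypath}, which is what you do, with the added (correct) verification that backward edges of the other paths avoid $V(Q)$ because their endpoints are inner vertices and the paths in $\Q$ are internally disjoint. No gaps.
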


By Corollaries \ref{nopath} and \ref{onlypath}, we can obtain the following corollary.

\begin{corollary}\label{iffcondition}
Let $\F$ be regimented by $\R$, and having no rainbow $s-t$ path.
Then $F \in \es(\R)$ if and only if $F$ contains an $s-t$ path, and equivalently, $F \in \ines(\R)$ if and only if $F$ does not contain an $s-t$ path.
\end{corollary}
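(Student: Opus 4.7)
The plan is to observe that this corollary is essentially a packaging of the two preceding corollaries, together with the fact that $\es(\R)$ and $\ines(\R)$ partition $\F$ by definition (since $\ines(\R) = \F \setminus \es(\R)$). So once the two equivalences are established for one of $\es(\R)$ or $\ines(\R)$, the equivalence for the other follows by taking complements.

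First I would prove the forward direction of the $\es(\R)$ characterization: if $F \in \es(\R)$, then by Corollary \ref{onlypath} the path $I(F)$ is an $s$--$t$ path contained in $F$, so in particular $F$ contains an $s$--$t$ path.

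Next I would prove the reverse direction by contrapositive, invoking Corollary \ref{nopath}: if $F \notin \es(\R)$ then $F \in \ines(\R)$, and Corollary \ref{nopath} directly says $F$ contains no $s$--$t$ path. This establishes $F \in \es(\R) \iff F$ contains an $s$--$t$ path. The equivalence for $\ines(\R)$ then follows immediately by negation, since $\ines(\R) = \F \setminus \es(\R)$.

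There is no real obstacle here; the work was done in Lemmas \ref{backward} and \ref{lem.onlypath} and packaged into Corollaries \ref{nopath} and \ref{onlypath}. The only point worth being careful about is to remember that ``$F$ contains an $s$--$t$ path'' should be parsed as ``the edge set $F$ contains the edge set of some $s$--$t$ path in the underlying digraph $D$,'' which is exactly the property used in the two corollaries.
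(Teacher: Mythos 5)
Your proposal is correct and matches the paper's argument exactly: the paper also derives this corollary directly from Corollaries \ref{nopath} and \ref{onlypath} together with the fact that $\es(\R)$ and $\ines(\R)$ partition $\F$.
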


 The following argument will be used twice, and hence it receives separate mention:
 
 \begin{lemma}\label{lem.exchange}
Let $\G, \ch$ be two   families of sets of edges, none of which possesses a rainbow $s-t$ path. Suppose that $\G$ is regimented by  $\R=(\Q,I)$ and $\ch$ is regimented by $\cs=(\cp,J)$. Suppose that  $\G \setminus \ch$  consists of a single set of edges $G$, and  $\ch \setminus \G$ consists of single set of edges $H$. Then either $G \in \ines(\R)$ and $H \in \ines(\cs)$, or 
 $I(G)=J(H)$.  
\end{lemma}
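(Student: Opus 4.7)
The plan has two stages: first reduce the claim to the subcase where both $G$ and $H$ are essential to their respective regimentations, and then handle that subcase by comparing the two regimentations on their common essential sets.

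For the reduction, suppose $G\in\ines(\R)$. Then $\es(\R) \subseteq \G\setminus\{G\} = \ch\setminus\{H\} \subseteq \ch$, so the same pair $(\Q,I)$ is also a regimentation of $\ch$. Applying Corollary~\ref{iffcondition} with this induced regimentation of $\ch$ yields that $H$ contains no $s-t$ path (since $H\notin\es(\R)$), and reapplying Corollary~\ref{iffcondition} with the given regimentation $\cs$ then forces $H\in\ines(\cs)$. The converse is symmetric, so the two alternatives ``$G\in\ines(\R)$ and $H\in\ines(\cs)$'' and ``$G\in\es(\R)$ and $H\in\es(\cs)$'' exhaust all possibilities, and the former is precisely the first conclusion of the lemma.

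Now assume $G\in\es(\R)$ and $H\in\es(\cs)$. By Corollary~\ref{iffcondition}, $\es(\R)$ consists exactly of those $F\in\G$ that contain an $s-t$ path, and similarly for $\es(\cs)$; removing $G$ from the former and $H$ from the latter yields a common subfamily $\es^* := \es(\R)\setminus\{G\} = \es(\cs)\setminus\{H\}$. By Corollary~\ref{onlypath}, for each $F\in\es^*$ both $I(F)$ and $J(F)$ equal the unique $s-t$ path in $F$, so the common restriction $\phi := I|_{\es^*} = J|_{\es^*}$ has image contained in $\Q\cap\cp$. The fiber-size condition in Definition~\ref{reg}(3) then gives, for every $Q'\in\Q$,
\[ |\phi^{-1}(Q')| \;=\; |I^{-1}(Q')\cap\es^*| \;=\; (|E(Q')|-1) - [I(G)=Q'], \]
with the analogous identity for $Q'\in\cp$ involving $J(H)$. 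Equating these two formulas on $Q'\in\Q\cap\cp$ yields $[I(G)=Q'] = [J(H)=Q']$ for every such $Q'$; summing gives $I(G)\in\Q\cap\cp$ if and only if $J(H)\in\Q\cap\cp$, and in that subcase setting $Q'=I(G)$ forces $I(G)=J(H)$.

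The main obstacle is the remaining subcase, $I(G)\in\Q\setminus\cp$ and $J(H)\in\cp\setminus\Q$, where I expect to derive a contradiction. Since the image of $\phi$ is contained in $\Q\cap\cp$, the left-hand side of the displayed identity vanishes on every $Q'\in\Q\setminus\cp$, which forces $|E(I(G))|=2$ and $|E(Q')|=1$ for every other $Q'\in\Q\setminus\cp$. Consequently $\bigcup_{Q'\in\Q\setminus\cp}\vc(Q') = \{v\}$, where $v$ is the middle vertex of $I(G)$; symmetrically $\bigcup_{R'\in\cp\setminus\Q}\vc(R') = \{w\}$ for the middle vertex $w$ of $J(H)$. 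Because $\Q$ and $\cp$ each decompose $\vc(\N)$ as the disjoint union of the inner-vertex sets of their paths, these two singletons must coincide, so $v=w$ and $I(G)=J(H)$, contradicting $I(G)\notin\cp$.
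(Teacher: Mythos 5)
Your proof is correct, and it follows the same overall strategy as the paper's: first show that $G \in \ines(\R)$ if and only if $H \in \ines(\cs)$, then, when both are essential, force $I(G)=J(H)$ by comparing the two regimentations on the common essential sets, using Corollaries \ref{onlypath} and \ref{iffcondition} together with condition (3) of Definition \ref{reg}. The differences are in the bookkeeping. For the dichotomy, you observe that $(\Q,I)$ is itself a regimentation of $\ch$ when $G$ is inessential and apply Corollary \ref{iffcondition} twice, whereas the paper combines the equality of the common essential sets with Lemma \ref{counting} to get $\es(\cs)=\es(\R)$; both are sound, and yours avoids the counting lemma. For the main case, the paper splits on whether $I(G)$ has one or several inner vertices and argues through which inner vertices (respectively, which paths) are covered by the images of the common essential sets, while you run a uniform fiber count of the common restriction $\phi$ over $\Q\cap\cp$, $\Q\setminus\cp$ and $\cp\setminus\Q$, and then kill the residual configuration $I(G)\in\Q\setminus\cp$, $J(H)\in\cp\setminus\Q$ using the partition of $\vc(\N)$ by path interiors. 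Your version buys a case-free argument (incidentally, the paths of $\Q\setminus\cp$ other than $I(G)$ that your identity would force to be the single edge $st$ cannot exist at all, since they would have empty fibers while $I$ is onto $\Q$ -- but your argument does not need this, as their interiors are empty either way); the paper's split is more immediate in the single-inner-vertex case, where the conclusion drops straight out of the covering argument.
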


\begin{proof}
Let $\K=\G \cap \ch$. So $\G = \K \cup \{G\}, ~\ch=\K \cup \{H\}$.

By Corollary \ref{iffcondition}, it is obvious that 
\begin{equation}\label{sameessential}
    \K \cap \es(\R)= \K \cap \es(\cs).
\end{equation}

By  Corollary \ref{onlypath}, $I(K)=J(K)$ for every $K \in \K \cap \es(\R)$. Hence 
\begin{equation}\label{iandj}
    \bigcup_{K \in \es(\R)\setminus \{G\}}V(I(K))=\bigcup_{K \in \es(\cs)\setminus \{H\}}V(J(K)) \end{equation}

Let us first show  that $G \in \ines(\R)$ if and only if $H \in \ines(\cs)$. Suppose that  $G \in \ines(\R)$. Then $\es(\R) \subseteq \K$. By \eqref{sameessential} and Lemma \ref{counting}, it follows that $\es(\cs)=\es(\R)$, so $H \in \ines(\cs)$. The converse implication is the same. 

Assume next that $G\in \es(\R)$ and $H \in \es(\cs)$. Let $Q_0=I(G)$. Consider first the case that $V^\circ(Q_0)$ consists of a single vertex $v$.  We have  
$\bigcup_{K \in \es(R)\setminus \{G\}}V(I(K))=V^\circ \setminus \{v\}$, and hence by \eqref{iandj} we have also 
$\bigcup_{K \in \es(\cs)\setminus \{H\}}V(J(K))=V^\circ \setminus \{v\}$.
Since the interiors of paths in $\cp$ partition $V^\circ$, it follows that $J(H)$ is the path $svt$, namely $Q_0$. 

It remains to consider the case  $|V^\circ(Q_0)|>1$. Then, not counting multiplicities, $\cp=\Q$, because every path of $\Q$ appears as $J(K)$
for some $K \in \K$. The only path in $\cp$ not covered enough times by paths $J(K), ~K \in \es(\cs) \setminus\{H\}$, is $Q_0$. So, necessarily $J(H)=Q_0$. 
\end{proof}

The next theorem is the main step towards the proof of Theorem \ref{main}.

\begin{theorem}\label{thm: regimentedpaths}
Let $\N=(D,s,t)$ be a network with $n$ inner vertices. 
Let $\F$ be a family of 
$n+k-1$ sets of edges in  $\N$, satisfying the condition that $\bigcup\K$ contains an $s-t$ path, for every $\K \subseteq \F$ of size $k$. 
Then either there exists an $\F$-rainbow $s-t$ path, or $\F$ is regimented. 
\end{theorem}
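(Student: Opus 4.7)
My plan is to prove the theorem by induction on $n$, the number of inner vertices. For the base case $n=0$, the cooperative hypothesis is vacuous since $|\F|=k-1<k$, and a regimentation is obtained by letting $\Q$ consist of any direct $s$-$t$ edge of $D$ (should one exist), with $\es=\emptyset$ and every set of $\F$ inessential.

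For the inductive step, suppose no $\F$-rainbow $s$-$t$ path exists. I would consider a \emph{partial regimentation}, meaning the same data as in Definition~\ref{reg} except that $\Q$ is not required to cover $V(\N)$ and condition (3) is relaxed to $|I^{-1}(Q)|\le|E(Q)|-1$. Among all such, choose $(\Q^*,I^*)$ maximizing $|\es^*|$. The calculation of Lemma~\ref{counting} becomes the inequality $|\es^*|\le\sum_{Q\in\Q^*}|V^\circ(Q)|\le n$, and a genuine regimentation corresponds to both inequalities being equalities. Our goal is to show this is forced by the extremal choice.

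Suppose instead $|\es^*|<n$, so that $|\F\setminus\es^*|\ge k$. By the cooperative hypothesis, some $k$-subset $\K\subseteq\F\setminus\es^*$ contains an $s$-$t$ path $P$ in $\bigcup \K$. I would split into two cases according to whether $V^\circ(P)$ meets $\bigcup_{Q\in\Q^*}V^\circ(Q)$. If they are disjoint, $P$ could be adjoined to $\Q^*$ to fill its $|V^\circ(P)|$ slots from $\K$; either this succeeds and contradicts the maximality of $|\es^*|$, or a Hall-style defect in the bipartite graph between edges of $P$ and sets in $\K$ provides a rainbow sub-path of $P$ violating the no-rainbow assumption. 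If they intersect, we splice in the spirit of Lemma~\ref{backward}: traverse some $Q\in\Q^*$ from $s$ to a shared vertex $x$, then follow $P$ from $x$ to $t$; the essential sets already assigned to $Q$ provide rainbow edges for the first leg, while the sets of $\K$ provide rainbow edges for the second, again producing an $\F$-rainbow $s$-$t$ path, a contradiction.

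The main obstacle is the distributed nature of the cooperative path $P$: its edges are spread across $k$ sets of $\K$, so in each of the above constructions one must simultaneously route distinct edges of $P$ to distinct sets of $\K$, rather than simply assigning one set per path as in the $k=1$ case. Handling this cleanly will require iterating the splicing argument while maintaining a Hall-type bookkeeping, in a manner refined from but analogous to the $k=1$ treatment of \cite{akz} and the exchange argument of Lemma~\ref{lem.exchange}. I expect this to be the heart of the proof, and it is precisely where the count $n+k-1$ (rather than the cruder $n+k-2$ of Theorem~\ref{coopdrisko}) is used in a sharp way.
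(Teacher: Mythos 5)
There is a genuine gap, and it is located exactly where you defer to ``iterating the splicing argument while maintaining a Hall-type bookkeeping.'' First, your extension step misreads what a regimentation demands: by condition (2) of Definition~\ref{reg}, every essential set $F$ with $I(F)=Q$ must satisfy $E(Q)\subseteq F$, i.e.\ each of the $|E(Q)|-1$ sets assigned to $Q$ contains the \emph{entire} path. Your Case 1 proposes to adjoin the cooperative path $P\subseteq\bigcup\K$ to $\Q^*$ and ``fill its $|V^\circ(P)|$ slots from $\K$'' by routing distinct edges of $P$ to distinct sets of $\K$; that is a rainbow (SDR-type) condition, not the regimentation condition, and the hypothesis supplies no single set containing all of $E(P)$, so the extension has no mechanism to succeed. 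The fallback of producing a rainbow $s$--$t$ path also does not go through as described: $P$ (or the tail $xPt$ in Case 2) may have far more than $k$ edges while only the $k$ sets of $\K$ are guaranteed to cover it; the spliced walk $QxPt$ need not even be a path, since the tail may revisit $Q$ or other paths of $\Q^*$; and the sets assigned to the other paths contain \emph{their own} paths' edges, not edges of $P$, so they cannot be recruited. The entire difficulty of the theorem is to pass from ``an $s$--$t$ path lies in the union of every $k$ sets'' to ``enough individual sets each contain a full $s$--$t$ path, and these paths are internally disjoint, cover $V^\circ(\N)$, and are contained in the right multiplicities'' (compare Corollary~\ref{onlypath} and Corollary~\ref{iffcondition}); nothing in your sketch bridges that gap. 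Note also that your induction hypothesis at $n-1$ is never invoked, so the announced ``induction on $n$'' does no work in your argument.

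For comparison, the paper's proof is also an induction on $n$, but the step is a contraction argument rather than an extremal/partial-regimentation argument: one picks $G\in\F$ containing an edge $sx$, contracts $sx$ to get a network with $n-1$ inner vertices and the family $\F'\setminus\{G'\}$ of size $(n-1)+k-1$, applies the induction hypothesis there, and then lifts the resulting regimentation back to $\N$, using Lemma~\ref{backward}, Lemma~\ref{lem.onlypath} and the exchange Lemma~\ref{lem.exchange} to decide, path by path, whether the un-contracted path passes through $x$ or not (the $Q'^{(1)}$ versus $Q'^{(2)}$ dichotomy), and to show $G$ itself becomes essential. If you wish to salvage your route, you would at least need analogues of Corollaries~\ref{nopath}--\ref{iffcondition} valid for a maximal \emph{partial} regimentation, and it is unclear these hold before a full regimentation is available. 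A minor further point: in your base case $n=0$ the regimentation requires the edge $st$ to be present, which your parenthetical ``should one exist'' leaves unresolved; this is a degenerate corner, but it should be stated how it is handled (e.g.\ when some member of $\F$ is nonempty one gets a rainbow path outright).
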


The case $k=1$ of the theorem is Theorem 3.3 in \cite{akz}.

It is worth noting that the weaker result, with $\F$ being of size $n+k$, is not hard. First, the statement:

\begin{theorem}\label{thm: simplecase}
Let $\N=(D,s,t)$ be a network with $n$ inner vertices. 
Let $\F$ be a family of 
$n+k$ sets of edges in  $\N$, satisfying the condition that $\bigcup \K$ contains an $s-t$ path for every $\K \subseteq \F$ of size $k$. 
Then  there exists an $\F$-rainbow $s-t$ path. 
\end{theorem}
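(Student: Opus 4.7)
The plan is to reduce the statement to the conic cooperative Carath\'eodory theorem, part (2) of Theorem \ref{coopboth}. First, I would embed the edges of $D$ as vectors in $\mathbb{R}^{V^\circ \cup \{t\}}$, which has dimension $n+1$, by mapping each directed edge $e = (u,v)$ to $\chi_e := e_v - e_u$, under the convention that $e_s := 0$. Thus edges leaving $s$ map to $e_v$, edges entering $t$ to $e_t - e_u$, and edges internal to $V^\circ$ to $e_v - e_u$; the edge $(s,t)$, if present, maps to $e_t$.

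The main step is to verify the equivalence: a set $F$ of edges contains an $s$-$t$ path if and only if $e_t \in \cone\{\chi_e : e \in F\}$. The forward direction is an immediate telescoping: summing $\chi_e$ along a path $s = v_0, v_1, \dots, v_m = t$ collapses to $e_{v_m} - e_{v_0} = e_t$. For the converse, I would assume $F$ contains no $s$-$t$ path, let $S$ be the set of vertices reachable from $s$ along edges of $F$, and introduce the linear functional $\phi(x) := \sum_{w \notin S,\, w \neq s} x_w$. Because no edge of $F$ leaves $S$, one checks that $\phi(\chi_e) \leq 0$ for each $e \in F$, whereas $\phi(e_t) = 1$; this rules out $e_t$ being a non-negative combination of the $\chi_e$. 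This is the step where I would expect to spend the most care, being essentially Farkas' lemma for the $s$-$t$ flow LP; however, the explicit reachability cut keeps the argument short.

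Once the equivalence is established, the rest is bookkeeping. I would form the family $(\tilde F_1, \dots, \tilde F_{n+k})$ of vector sets in $\mathbb{R}^{n+1}$ by setting $\tilde F_i := \{\chi_e : e \in F_i\}$. This is a family of $d + k - 1$ sets in $\mathbb{R}^d$ with $d = n+1$, and by the equivalence the hypothesis that every $k$-wise union contains an $s$-$t$ path translates into the conic cooperative hypothesis $e_t \in \cone\bigl(\bigcup_{i \in K}\tilde F_i\bigr)$ for every $k$-subset $K$. Applying Theorem \ref{coopboth}(2) with $v = e_t$ produces a rainbow subset of vectors with $e_t$ in its cone; this pulls back to a rainbow set of edges containing an $s$-$t$ path, and extracting such a path yields the desired $\F$-rainbow $s$-$t$ path.
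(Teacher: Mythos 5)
Your proposal is correct and is essentially the paper's first proof of Theorem~\ref{thm: simplecase}: encode each edge $ab$ as $\chi_b-\chi_a$ in an $(n+1)$-dimensional space, observe that containing an $s$--$t$ path is equivalent to the target vector lying in the cone of these encodings (your reachability-cut functional is a fine way to justify the nontrivial direction), and apply part~(2) of Theorem~\ref{coopboth}. The paper also records a second, purely combinatorial proof via a maximal rainbow tree rooted at $s$, but your route matches its Proof~1.
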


Here are two proofs:

{\em Proof 1}.  ~~
Observe that  a set $H$ of edges in $\N$ contains an $s-t$ path if and only if the cone of $\{\chi_{b}-\chi_{a} \mid ab \in H\}$ contains the vector $\chi_{t}-\chi_s$ (here $\chi_v$ is the function that is $1$ on $v$ and $0$ on all other vertices). Also note that all these vectors  reside in an $n+1$-dimensional space (they are of length $n+2$, but all are perpendicular to the all-$1$ vector). Apply now Theorem 
\ref{coopboth}, 
part (2). 

{\em Proof 2}.~~ Take a maximal $\F$-rainbow tree $T$ rooted at $s$. Assume, for contradiction, that it does not reach $t$. Then it represents at most $n$ members of $\F$. Hence there are $k$ sets $F \in \F$ not represented in $T$. By assumption, their union contains an $s-t$ path. The first edge leaving $T$ can then be added to $T$ to yield a larger rainbow tree, which contradicts the maximality of $T$.

\begin{definition}[contracting an edge $sx$] Let $sx$ be an edge of $\N$. We can contract $sx$ to a newly defined vertex $s'$, that will serve as the source of a new network $\N'$. Here is what this does to  sets of edges, and to paths. 

\begin{enumerate}
    \item Let $F$ be a set of edges in a network $\N=(D,s,t)$, and let  $sx$ be an edge, where $x$ is an inner vertex. The contracted   set of edges $F|_{sx \rightarrow s'}$ is obtained from $F$ by replacing  every edge $sy$ or $xy$ belonging to $F$ by the edge $s'y$, and removing all edges $yx$. 
    
    \item An $s-t$ path $P$ is transformed by the contraction of $sx$ to an $s'-t$ path $P'$, defined as follows. If  $x\not \in V(P)$ then $P'=P$ with $s'$ replacing $s$. If $x\in V(P)$ then $P'=s'yP$ where $y$ is the vertex following $x$ in $P$ (so, the vertices in $Px$ disappear.) We also write $P'=P|_{sx\rightarrow s'}$. Note that in this definition $E(P')$ is not necessarily equal to $E(P)|_{sx \rightarrow s'}$.
\end{enumerate}
\end{definition}

\begin{proof}[Proof of Theorem \ref{thm: regimentedpaths}.]
By induction on $n$.
The case $n=0$ is easy. So let $n \geq 1$ and assume that the theorem is valid when $n-1$ replaces $n$. 

Since $n+k-1 \geq k$, $\bigcup \F$ contains an $s-t$ path. So there exists at least one set $G\in \F$ containing an edge $sx$. If $x=t$ then the path $st$ is rainbow, so we may assume that $x \neq t$.
Now contract $sx$: for each $F \in \F$ let $F'=F|_{sx \rightarrow s'}$. 
Let $\K' =(F' \mid F \in \F)$ for $\K \subseteq \F$.
Let $\N'$ be the network with vertex set $V(\N)\setminus \{s,x\} \cup \{s'\}$,  source $s'$, target $t$, and edge set $\bigcup(\F' \setminus \{G'\})$.

Every $\K \subseteq \F$ of size $k$  contains in its union the edge set of an $s-t$ path in $\N$, which is easily seen to imply the same, with $s'$ replacing $s$, for $\K'$
in $\N'$. 
By the induction hypothesis, either there exists an $\F'\setminus \{G'\}$-rainbow $s'-t$ path $P'$, or $\F'\setminus\{G'\}$ is regimented. 
In the first case, let $y$ be the vertex following $s'$ in $P'$. Then either $syP'$ or $sxyP'$ is a rainbow $s-t$ path in $\N$, and we are done. 
So, we may assume the second possibility. 
Let $\R'=(\Q',I')$ be a regimentation of $\F'\setminus \{G'\}$, and let $\es'=\es(\R'), ~\ines'=\ines(\R')$.

Let $\tines=(F \in \F\setminus \{G\} \mid F' \in \ines')$ and  $\tes=(F \in \F\setminus \{G\} \mid F' \in \es')$.

 By Lemma \ref{counting}  $|\es'|=n-1$, so
 \begin{equation}\label{sizeinessential}
       |\tines|=|\ines'|=k-1.
 \end{equation}

In all claims below we assume  that there is no $\F$-rainbow $s-t$ path.

\medskip

 Let $B'=\bigcup_{Q' \in \Q'}B(Q')$.
By Lemma~\ref{backward}, $\bigcup \ines' \subseteq B'$ and $\bigcup I'^{-1}(Q') \subseteq E(Q') \cup B' \cup U(Q')$ for every $Q' \in \Q'$.

\begin{notation} [two ways of un-contracting $sx$]
Given an $s'-t$ path $Q'$  in $\N'$, let $Q'^{(1)}$ be the path obtained from $Q'$ by replacing $s'$ with $s$ and $Q'^{(2)}$  the path obtained from $Q'$ by expanding its first edge $s'y$ to the path $sxy$.
\end{notation}

Our aim is to glean from $\R'$ a regimentation $\R=(\Q, I)$ of $\F$. The set $\es(\R)$ will contain $G$ and $\Q$ will contain $s-t$ paths $f(Q'), ~~Q' \in \Q'$, where $f$ is an injective function defined as follows.
Let $Q' \in \Q'$ and let $F\in \F \setminus \{G\}$ be such that $I'(F')=Q'$. 
By \eqref{sizeinessential} and the condition of the theorem, the set $F \cup \bigcup \tines$ contains an $s-t$ path $Q$.
Let $f(Q')=Q$.

\begin{claim}
    
    $Q'=Q|_{sx\rightarrow s'}$.
    
\end{claim}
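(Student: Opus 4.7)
The plan is to apply Lemma~\ref{backward} to $\R'$ in the contracted network $\N'$ and then conclude via Lemma~\ref{lem.onlypath}. Since we are in the case where no $\F$-rainbow $s$-$t$ path exists in $\N$, there is also no $(\F'\setminus\{G'\})$-rainbow $s'$-$t$ path in $\N'$: any such path $P'$ would lift to a rainbow $s$-$t$ path in $\N$ by prepending either $sy$ or the detour $s,x,y$ (where $y$ is the vertex following $s'$ on $P'$), as already observed earlier in the proof. Thus Lemma~\ref{backward}, applied to $\R'$ regimenting $\F'\setminus\{G'\}$ in $\N'$, yields $\bigcup \ines(\R') \subseteq B'$, and also $F' \subseteq E(Q') \cup B' \cup U(Q')$ since $F' \in I'^{-1}(Q')$.

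Combining these inclusions gives $F' \cup \bigcup \ines(\R') \subseteq E(Q') \cup B' \cup U(Q')$. The next step is to verify that $E(Q|_{sx\to s'}) \subseteq F' \cup \bigcup \ines(\R')$. Since $E(Q) \subseteq F \cup \bigcup \tines$ by the choice of $Q$, this reduces to a short case analysis on whether $x \in V(Q)$: if $x \notin V(Q)$, every edge of $Q|_{sx\to s'}$ is either an unchanged edge of $Q$ or the contraction of the initial edge $sy$ of $Q$; if $x \in V(Q)$, the only new edge $s'y$ of $Q|_{sx \to s'}$ (with $y$ following $x$ on $Q$) is the contraction of $xy \in E(Q)$, and the remaining edges of $Q|_{sx\to s'}$ are inherited from $Q$.

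Hence $E(Q|_{sx\to s'}) \subseteq E(Q') \cup B' \cup U(Q')$. To invoke Lemma~\ref{lem.onlypath} in $\N'$, I would split $B' = B(Q') \cup \tb$ with $\tb := \bigcup_{Q''\in \Q'\setminus\{Q'\}} B(Q'')$. Since no edges enter $s'$ and none leave $t$, both endpoints of any backward edge of $Q''$ lie in $\vc(Q'')$; the internal disjointness of the paths in $\Q'$ then ensures $\tb$ is vertex-disjoint from $V(Q')$. Lemma~\ref{lem.onlypath} now forces $Q|_{sx\to s'} = Q'$, as required. The only real nuisance in this argument is the edge-set bookkeeping under contraction, since $E(P|_{sx\to s'})$ and $E(P)|_{sx\to s'}$ need not coincide in general; but fortunately only the easy containment $E(P|_{sx\to s'}) \subseteq E(P)|_{sx\to s'}$ is used.
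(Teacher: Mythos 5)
Your proposal is correct and follows essentially the same route as the paper: the containment $E(Q|_{sx\rightarrow s'})\subseteq F'\cup\bigcup\ines'$ from the choice of $Q$, the inclusion $F'\cup\bigcup\ines'\subseteq E(Q')\cup B'\cup U(Q')$ from Lemma~\ref{backward}, and then Lemma~\ref{lem.onlypath} after splitting $B'$ as $B(Q')$ plus backward edges of the other paths. You merely spell out details the paper leaves implicit (lifting a rainbow path from $\N'$ to $\N$, the contraction bookkeeping, and the vertex-disjointness of the other paths' backward edges from $Q'$), which is fine.
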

\begin{proof}
By the choice of $Q$, we have    $E(Q|_{sx\rightarrow s'}) \subseteq F' \cup \bigcup \ines'$. By Lemma  \ref{backward}, we have $F' \cup \bigcup \ines' \subseteq E(Q') \cup B' \cup U(Q')= E(Q') \cup B(Q') \cup \bigcup_{T' \in \Q' \setminus\{ Q'\}} B(T') \cup U(Q')$.
The claim now follows by  Lemma \ref{lem.onlypath}.
\end{proof}

There are two  possibilities:

\begin{enumerate}[($a$)]
    \item 
    $x \not \in V(Q)$. In this case $Q=Q'^{(1)}$.
    \item 
    $x \in V(Q)$. Suppose, in this case, that $Qx$ contains inner vertices. Let $y$ be the first inner  vertex of $Qx$. Then  $y \in V^\circ(T')$ for some $T' \in \Q' \setminus\{Q'\} $, and then $syT'$ is a rainbow $s-t$ path in $\N$ since it has enough represented sets in $I'^{-1}(T') \cup \{G\}$.
    So, we may assume that $V^\circ(Qx)=\emptyset$, meaning that the first edge on $Q$ is $sx$, meaning in turn that $Q=Q'^{(2)}$.
   \end{enumerate}

   \begin{claim}\label{sxie}
$sx \not \in \bigcup \tines$. 
\end{claim}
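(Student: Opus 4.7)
My plan is to assume for contradiction that some $F_0 \in \tines$ contains $sx$, then swap the roles of $G$ and $F_0$ in the inductive reduction and derive a contradiction from the hypothesis that every $k$ members of $\F$ have an $s-t$ path in their union. The first step is to contract $sx$ via $F_0$ (in the same way as was done via $G$) and apply the induction hypothesis to $\F \setminus \{F_0\}$, whose contracted image $\F' \setminus \{F_0'\}$ has size $(n-1)+k-1$ and still satisfies the $s'-t$ path condition in $\N'$. This yields either a rainbow $s'-t$ path or a regimentation $\cs' = (\cp', J')$ of $\F' \setminus \{F_0'\}$.

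If a rainbow $s'-t$ path $P'$ exists, I would lift it exactly as in the main proof with $F_0$ replacing $G$: depending on whether the first edge of $P'$ lifts to $sy$ or to $xy$, either $P'^{(1)}$ is already an $\F$-rainbow $s-t$ path, or $P'^{(2)}$ becomes one after prepending $sx$ via $F_0$ (which is not among the sets used in $P'$). Either outcome contradicts the standing assumption that $\F$ has no rainbow $s-t$ path, so I may assume the regimented case.

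Next I apply Lemma \ref{lem.exchange} to $\G = \F' \setminus \{G'\}$ (regimented by $\R'$) and $\ch = \F' \setminus \{F_0'\}$ (regimented by $\cs'$). These two families differ exactly in $\{F_0'\}$ versus $\{G'\}$, and neither admits a rainbow $s'-t$ path by the lifting argument of the previous paragraph. Since $F_0' \in \ines(\R')$, the alternative conclusion $I'(F_0') = J'(G')$ of the lemma cannot hold ($I'(F_0')$ is undefined), so the lemma forces $G' \in \ines(\cs')$.

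The final combinatorial step is the heart of the argument. Setting $\tines^{(2)} := \{F \in \F \setminus \{F_0\} : F' \in \ines(\cs')\}$ gives a family of size $k-1$ containing $G$. Lemma \ref{backward} applied to $\cs'$ shows $F' \subseteq B(\cs')$ for each $F \in \tines^{(2)}$, and applied to $\R'$ shows $F_0' \subseteq B(\R')$. Since no edge incident to the source $s'$ can be a backward edge of any $s'-t$ path, each set in $\tines^{(2)} \cup \{F_0\}$ contains no edge of the form $sy$ with $y \neq x$ and no edge $xy$ with $y \neq s$. But $|\tines^{(2)} \cup \{F_0\}| = k$, so by hypothesis its union contains an $s-t$ path $R$; the only allowed first edge of $R$ is $sx$, and then $R$ must continue with some $xw$ where $w \neq s$, which no set contains. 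This contradiction establishes the claim. I anticipate that the main obstacle will be precisely this symmetry setup and the careful invocation of Lemma \ref{lem.exchange} to extract the crucial fact $G' \in \ines(\cs')$.
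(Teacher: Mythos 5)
Your proof is correct and takes essentially the same route as the paper: contract $sx$ via $F_0$, obtain a second regimentation of $\F'\setminus\{F_0'\}$, use Lemma \ref{lem.exchange} to conclude $G'$ is inessential there, and then contradict the hypothesis on $k$-fold unions via the backward-edge containment of Lemma \ref{backward}. The only (harmless) divergence is the final contradiction: the paper takes the $k$ sets $\{G\}\cup\tines$ and observes their union contains no edge into $t$, whereas you take $F_0$ together with the preimages of the inessential sets of the new regimentation and observe their union has no admissible edge leaving $s$ other than $sx$ and none leaving $x$; both correctly preclude an $s$--$t$ path.
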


\begin{proof}
Let $F_0\in \tines$ and suppose that $sx \in F_0$. Recall that  
$\F'$ is the family of sets of edges obtained, where, for every $F \in\F$,  $F'$ is the image of $F$ under the contraction of $sx$.   
By the same  argument as above, 
$\F'\setminus \{F'_0\}$ is regimented in $\N'$, by a regimentation $\T=(\Q(\T), J)$. 
 Then $G' \in \ines(\T)$ by Lemma \ref{lem.exchange}, and hence $G$ do not contain an edge $yt$. But this would imply that   $G \bigcup \tines (\R)$ does not contain such an edge, and hence that it does not contain an $s-t$ path, contrary to the assumption of the theorem. 
\end{proof}

Since $E(Q) \subseteq F \cup \bigcup \tines$ and $\bigcup \ines' \subseteq B'$ by Lemma \ref{backward}, a corollary of Claim \ref{sxie} is: 
\begin{equation}\label{part1ofclaim}
    E(Q) \subseteq F.
\end{equation}

   \begin{claim} \label{onlyoneq} The choice of $f(Q')$ is independent of the choice of $F$.
   \end{claim}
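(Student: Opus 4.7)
I argue by contradiction, showing that different choices of $F$ forcing different paths would produce an $\F$-rainbow $s$-$t$ path. Suppose $F_1, F_2 \in \F \setminus \{G\}$ both satisfy $I'(F_1') = I'(F_2') = Q'$ but yield paths $Q_1 \neq Q_2$; by the case analysis (a)--(b) above, each $Q_i \in \{Q'^{(1)}, Q'^{(2)}\}$, so we may assume $Q_1 = Q'^{(1)}$ and $Q_2 = Q'^{(2)}$. Write $y_0, y_1, \ldots, y_{m-1}$ for the inner vertices of $Q'$ in order, where $m = |V^\circ(Q')| = |I'^{-1}(Q')|$, so that $E(Q') = \{s'y_0, y_0y_1, \ldots, y_{m-1}t\}$. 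By \eqref{part1ofclaim}, $E(Q_2) \subseteq F_2$, so in particular $sx \in F_2$.

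Since $sx \in F_2$, I rerun the inductive step with $F_2$ in the role of $G$: either an $(\F \setminus \{F_2\})$-rainbow $s$-$t$ path exists (contradicting the hypothesis, since it is also an $\F$-rainbow path), or we obtain a regimentation $\T = (\Q(\T), J)$ of $\F' \setminus \{F_2'\}$. I view both $\R'$ and $\T$ as regimentations in the common ambient network $\N^\ast = (V(\N'), s', t, \bigcup \F')$; this is legitimate since neither of the families $\F' \setminus \{G'\}$ and $\F' \setminus \{F_2'\}$ admits a rainbow $s'$-$t$ path in $\N^\ast$ (any such path would lift to an $\F$-rainbow $s$-$t$ path in $\N$). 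The two regimented families differ precisely by the exchange $G' \leftrightarrow F_2'$, so Lemma~\ref{lem.exchange} applies; since $F_2' \in \es(\R')$, the inessential alternative is ruled out, forcing $J(G') = I'(F_2') = Q'$. Definition~\ref{reg}(2) applied to $\T$ then gives $E(Q') \subseteq G'$, and since the edges $y_0y_1, y_1y_2, \ldots, y_{m-1}t$ are unaffected by contracting $sx$, this lifts to $\{y_0y_1, y_1y_2, \ldots, y_{m-1}t\} \subseteq G$.

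The contradiction now comes from an explicit $\F$-rainbow path along $Q'^{(1)} = s, y_0, y_1, \ldots, y_{m-1}, t$: assign $sy_0 \mapsto F_1$ (using $E(Q'^{(1)}) \subseteq F_1$), $y_0y_1 \mapsto G$, and the remaining $m-1$ edges $y_1y_2, \ldots, y_{m-1}t$ bijectively to the $m-1$ sets in $I'^{-1}(Q') \setminus \{F_1\}$, each of which contains every interior edge of $Q'$ by Definition~\ref{reg}(2) applied to $\R'$. These $m+1$ sets are pairwise distinct as multiset elements of $\F$: the copy of $G$ used here is the one removed to form $\F \setminus \{G\}$, while $F_1$ and the other $m-1$ assigned sets all lie in $\F \setminus \{G\} \supseteq I'^{-1}(Q')$. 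This yields the desired $\F$-rainbow $s$-$t$ path, contradicting the standing hypothesis.

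The main technical obstacle is reconciling the two regimentations $\R'$ and $\T$, which are ostensibly built in slightly different contracted networks (one omitting edges unique to $G'$, the other those unique to $F_2'$); embedding both in the common ambient network $\N^\ast$ resolves this without disturbing the regimentation axioms or the no-rainbow hypotheses required for Lemma~\ref{lem.exchange}.
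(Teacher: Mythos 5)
Your proof is correct and follows essentially the same route as the paper: assume $Q_1=Q'^{(1)}$, $Q_2=Q'^{(2)}$, deduce $sx\in F_2$, rerun the contraction argument on $\F'\setminus\{F_2'\}$, apply Lemma~\ref{lem.exchange} to get $J(G')=Q'$ and hence $E(Q')\subseteq G'$, and then exhibit $Q'^{(1)}$ as an $\F$-rainbow $s$-$t$ path using $F_1$, $G$, and the sets of $I'^{-1}(Q')\setminus\{F_1\}$. Your explicit remark about placing both regimentations in a common ambient network is only a minor clarification of what the paper does implicitly.
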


   \begin{proof}
       We have to show that if $F_1, F_2 \in \F \setminus \{G\}$ satisfy  $I'(F_i')=Q', ~~i=1,2$ and 
       $Q_i$ are $s-t$ paths whose edge sets are contained in $F_i \cup \tines$ ($i=1,2$) then $Q_1=Q_2$. We know that $Q_i$ are either  $Q'^{(1)}$ or $Q'^{(2)}$. Assume, for contradiction, that $Q_1\neq Q_2$, say   $Q_1=Q'^{(1)}$ and  $Q_2=Q'^{(2)}$. Then $sx \in E(Q_2)$ and hence $sx\in F_2$. The set 
       $\F' \setminus\{F'_2\}$ lives in $\N'$, and repeating the previous argument we deduce that it has a regimentation $\cs=(\Q(\cs),J)$. By Lemma \ref{lem.exchange} $J(G')=I'(F_2')=Q'$. In particular $G' \supseteq E(Q')$. Since $Q_1=Q'^{(1)}$, the edge $ss_{Q'}$ belongs to $E(Q_1) \subseteq F_1$. Then, using an edge from $G$ and edges from  the sets $F\in \F$ such that  $F'\in I'^{-1}(Q')$ 
       shows that $ss_{Q'}Q'=Q'^{(1)}$ is an $\F$-rainbow $s-t$ path (note that edges in $E(s_{Q'} Q')$ are also   edges of $F$). This is the desired contradiction. 
          \end{proof}

\begin{claim}\label{modify}
\hfill
\begin{enumerate}
       \item If $f(Q')=Q'^{(2)}$ then $G \supseteq E(f(Q'))$.
       
       \item At most one $Q' \in \Q'$ satisfies $f(Q')=Q'^{(2)}$.

    \item If $f(Q')=Q'^{(1)}$ for all $Q' 
    \in \Q'$ then $G$ contains the edges of the $s-t$ path $sxt$.
\end{enumerate}
\end{claim}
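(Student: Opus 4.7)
The plan is to apply Lemma~\ref{lem.exchange} together with rainbow $s$-$t$ path constructions that exploit the regimentation $\R'$ of $\F' \setminus \{G'\}$, under the standing assumption that $\F$ admits no rainbow $s$-$t$ path.

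For part~(1), fix any $F \in I'^{-1}(Q')$. Repeating the argument used in Claim~\ref{onlyoneq}, the family $\F' \setminus \{F'\}$ is regimented by some $\cs=(\Q(\cs),J)$, and Lemma~\ref{lem.exchange} then gives $J(G')=I'(F')=Q'$, so $E(Q') \subseteq G'$. Un-contracting the edge $sx$, this yields $E(s_{Q'}Q') \subseteq G$ (internal edges are preserved by the contraction) together with $ss_{Q'}\in G$ or $xs_{Q'}\in G$. The first option is impossible, since it would let us rainbow-color $Q'^{(1)}$ by assigning $G$ to $ss_{Q'}$ and the $|V^\circ(Q')|$ members of $I'^{-1}(Q')$ to the edges of $E(s_{Q'}Q')$ (each such set contains $E(s_{Q'}Q')$). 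Hence $xs_{Q'}\in G$, and combined with $sx\in G$ this gives $G \supseteq E(Q'^{(2)})=E(f(Q'))$.

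For part~(2), suppose for contradiction that $f(Q_1')=Q_1'^{(2)}$ and $f(Q_2')=Q_2'^{(2)}$ for distinct $Q_1',Q_2' \in \Q'$. Since $Q_i'^{(2)}$ begins with $sx$ and $\bigcup\tines$ omits $sx$ by Claim~\ref{sxie}, for each $i$ there exists $F_i \in I'^{-1}(Q_i')$ with $sx \in F_i$. Part~(1) applied to $Q_2'$ yields $G \supseteq E(Q_2'^{(2)})$; in particular $xs_{Q_2'}\in G$ and $E(s_{Q_2'}Q_2') \subseteq G$. I rainbow-color $Q_2'^{(2)}$ by assigning $F_1$ to $sx$, $G$ to $xs_{Q_2'}$, and the $|V^\circ(Q_2')|$ members of $I'^{-1}(Q_2')$ to the $|V^\circ(Q_2')|$ forward edges of $E(s_{Q_2'}Q_2')$ (each such set contains $E(s_{Q_2'}Q_2')$). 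The chosen sets are pairwise distinct since $F_1 \in I'^{-1}(Q_1')$ is disjoint from $I'^{-1}(Q_2') \cup \{G\}$, producing a rainbow $s$-$t$ path in $\F$ and the desired contradiction.

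For part~(3), the family $\tines \cup \{G\}$ has $k$ members, so its union contains an $s$-$t$ path $P$. I first observe that $\bigcup \tines$ has no edges out of $s$ or $x$ and no edges into $t$: each $F \in \tines$ satisfies $F' \subseteq B' = \bigcup_{Q' \in \Q'} B(Q')$ by Lemma~\ref{backward}, and $B(Q')$ excludes edges incident to $s'$ or to $t$. So the first edge of $P$ is $sv_0 \in G$. If $v_0=t$ then $st$ is a one-edge rainbow path; if $v_0 \neq x,t$, then $v_0 \in V^\circ(Q_0')$ for some $Q_0'$, and a rainbow $s$-$t$ path is obtained by assigning $G$ to $sv_0$ and distinct members of $I'^{-1}(Q_0')$ to the edges of the tail from $v_0$ to $t$ along $Q_0'$ (each contains all of $E(s_{Q_0'}Q_0')$). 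Both cases contradict the assumption, so $v_0=x$ and $P$'s second edge is $xv_1 \in G$. If $v_1 \neq t$, then $v_1 \in V^\circ(Q_1')$ for some $Q_1'$ with $|V^\circ(Q_1')| \geq 1$. The argument of part~(1) gives $E(s_{Q_1'}Q_1') \subseteq G$, and the hypothesis $f(Q_1')=Q_1'^{(1)}$ forces $ss_{Q_1'} \in F$ for every $F \in I'^{-1}(Q_1')$ (since $\bigcup \tines$ has no edges out of $s$). Now rainbow-color $Q_1'^{(1)}$ with $F$ for $ss_{Q_1'}$, $G$ for one edge of $E(s_{Q_1'}Q_1')$, and $I'^{-1}(Q_1') \setminus \{F\}$ for the remaining internal edges---this yields another contradiction. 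Hence $v_1=t$, so $P=sxt$ and $xt \in G$, giving $G \supseteq E(sxt)$. The main obstacle is part~(3)'s case analysis on the path $P$; once the first two edges are pinned down as $sx$ and $xt$, the remaining rainbow counts are tight but routine.
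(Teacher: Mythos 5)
Parts (1) and (2) are essentially sound. Part (1) is the paper's own argument; the one thing you should make explicit is that before you may ``repeat the argument of Claim~\ref{onlyoneq}'' for $\F'\setminus\{F'\}$ you need $sx\in F$: this is what allows a hypothetical $\F'\setminus\{F'\}$-rainbow $s'$--$t$ path to be converted into an $\F$-rainbow $s$--$t$ path (prepending $sx$ when the set representing the first edge $s'y$ contains only $xy$ and $G'$ is already used elsewhere on the path), and without that conversion you have neither a regimentation of $\F'\setminus\{F'\}$ nor the hypotheses of Lemma~\ref{lem.exchange}. When $f(Q')=Q'^{(2)}$ this is fine, since Claim~\ref{onlyoneq}, \eqref{part1ofclaim} and Claim~\ref{sxie} give $sx\in F$ for every $F'\in I'^{-1}(Q')$ -- exactly the line the paper inserts before running the trick. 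Your part (2), which replaces the paper's appeal to Corollary~\ref{onlypath} by an explicit rainbow colouring of $Q_2'^{(2)}$, is a correct variant.

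Part (3) has a genuine gap at the sentence ``The argument of part~(1) gives $E(s_{Q_1'}Q_1')\subseteq G$''. Here $f(Q_1')=Q_1'^{(1)}$, so for $F\in I'^{-1}(Q_1')$ you only know $ss_{Q_1'}\in F$, not $sx\in F$, and the exchange machinery fails at precisely the point described above: you cannot rule out an $\F'\setminus\{F'\}$-rainbow $s'$--$t$ path whose first edge is represented by a set containing only the $xy$-lift while $G'$ represents another of its edges, so you obtain neither a regimentation of $\F'\setminus\{F'\}$ nor the conclusion $J(G')=Q_1'$ from Lemma~\ref{lem.exchange}. Worse, the intermediate statement itself is not to be expected: in the all-$Q'^{(1)}$ case the true constraint on $G$, which the paper extracts by deleting $x$, noting that $(\{Q'^{(1)} \mid Q'\in\Q'\},\tilde{I})$ regiments the resulting family $\tilde{\F}$ with $\tilde{G}$ inessential, and applying Lemma~\ref{backward}, is only that every edge of $G$ not incident with $x$ is a backward edge of some $Q'^{(1)}$; nothing forces $G\supseteq E(s_{Q_1'}Q_1')$. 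That re-regimentation after deleting $x$ is the missing idea: it confines $G\cup\bigcup\tines$ to $\{sx,xt\}\cup\bigcup_{Q'\in\Q'}B(Q'^{(1)})\cup U(sxt)$, after which Lemma~\ref{lem.onlypath} forces the $s$--$t$ path in $G\cup\bigcup\tines$ to be $sxt$, i.e.\ it is what rules out your vertex $v_1\neq t$ (in particular it excludes a continuation of $P$ from $v_1$ to $t$ inside $G$, which your argument never addresses). A smaller point: your observation that $\bigcup\tines$ has no edge out of $s$ needs Claim~\ref{sxie} for the edge $sx$ itself, since that edge vanishes under the contraction and is therefore not controlled by the inclusion $\bigcup\ines'\subseteq B'$.
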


\begin{proof}
To prove (1), let $f(Q')=Q'^{(2)}$ for some $Q' \in \Q'$.

Then, by Claim \ref{onlyoneq}, $sx \in F$ for every $F' \in I'^{-1}(Q')$.
We use the same trick as in the proof of Claim \ref{onlyoneq},  interchanging   the roles of $F$ and $G$. Consider $\F' \setminus \{F'\}$.
As above, we may assume that $\F' \setminus \{F'\}$ is regimented, by a regimentation  $(\cp',J')$.
By Lemma \ref{lem.exchange},  $J'(G')=I'(F')=Q'$, implying that $G' \supseteq E(Q')$.
Then $G$ contains either $E(Q'^{(1)})$ or $E(Q'^{(2)})$.
If $G$ contains $E(Q'^{(1)})$, then $ss_{Q'}Q'$ (which is just $Q'^{(1)}$) is an $\F$-rainbow $s-t$ path: the edge $ss_{Q'}$ represents  $G$; since $|I'^{-1}(Q')|=|E(Q')|-1$, the other edges have enough represented sets $F \in \F$ such that $F' \in I'^{-1}(Q')$ (remember that $G \not \in I'^{-1}(Q')$). 
We have thus shown that $G$ does not contain $E(Q'^{(1)})$, so it contains $E(Q'^{(2)})$, namely  $G \supseteq E(f(Q'))$.

Next we prove (2). 
Let $f(Q')=Q'^{(2)}$ for some $Q' \in \Q'$.
By the above argument and Corollary \ref{onlypath}, $J'(G')=Q'$ is the only path contained in $G'$.
This directly implies (2).

Finally, we prove (3). Assume that  $f(Q')=Q'^{(1)}$ for all $Q' 
    \in \Q'$. 
Let $\tilde{\N}$ be the network obtained from $\N$ by deleting the vertex $x$, and let $\tilde{F}$ be the  set of edges of $\tilde{\N}$, obtained from $F$ by deleting all edges incident with $x$.
Let
$\tilde{\Q}=\{Q'^{(1)} \mid Q' \in \Q'\}$, and $\tilde{I}(\tilde{F})=f(I'(F'))$. By 
\eqref{part1ofclaim} and the assumption that $f(Q')=Q'^{(1)}$ for all $Q' \in \Q'$ the set $\tilde{\F}=(\tilde{F} \mid F \in \F)$ is regimented by $(\tilde{\Q},\tilde{I})$.
The fact that there is no $\F$-rainbow $s-t$ path implies that  there is also  no $\tilde{\F}$-rainbow $s-t$ path. Therefore,  by Lemma \ref{backward}, we have $\tilde{G} \cup \bigcup_{F \in \tilde{\ines}} \tilde{F} \subseteq \bigcup_{Q \in \tilde{\Q}} B(Q)$.
Thus 
\[G \cup \bigcup \tines \subseteq \{sx,xt\} \cup \bigcup_{Q' \in \Q'} B(Q'^{(1)}) \cup U(sxt).\]
By the assumption of the theorem, $G \cup \bigcup \tines$ contains an $s-t$ path, say $Q_G$. By Lemma \ref{lem.onlypath} we have $Q_G=sxt$, and by Claim \ref{sxie} we obtain $G \supseteq E(Q_G)$. 
 This concludes the proof of the claim. \end{proof}

\begin{remark}\label{disjoint}
By the claim  the paths $f(Q'), ~~Q' \in \Q'$ are internally disjoint. In particular, there is at most one path $f(Q')$ containing $x$.  
\end{remark}

   We can now complete the induction step in the proof of Theorem \ref{thm: regimentedpaths}, by showing that $\F$ is regimented. \\ 
   
{\bf Case I:}~~ $f(Q')=Q'^{(1)}$ for all $Q' \in \Q'$.

Let $\Q=\{f(Q') \mid Q' \in \Q'\} \cup \{Q_0\}$ where $Q_0=sxt$. 
Let $\es=( F \mid F' \in \es(\R') ) \cup \{G\}$.
Define $I: \es \to ~\Q$  by $I(F)=f(I'(F'))$ for $F \neq G$, and $I(G)=Q_0$. 

\begin{claim}\label{regimentationcase1}
$(\Q,I)$ is a regimentation of $\F$.
\end{claim}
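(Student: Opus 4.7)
The plan is to verify, clause by clause, that $(\Q, I)$ satisfies every requirement of Definition \ref{reg}, together with the implicit demands that $\Q$ consists of pairwise internally disjoint $s$--$t$ paths and that $I$ surjects onto $\Q$. Nearly all the real work has already been done in Claims \ref{onlyoneq} and \ref{modify}; the present claim is essentially an assembly job, and I would organize it as four short verifications.

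I would begin with the structural properties of $\Q$. Each path $f(Q') = Q'^{(1)}$ is obtained from $Q' \in \Q'$ merely by relabelling the initial vertex $s'$ as $s$, so it has the same interior as $Q'$. Therefore pairwise internal disjointness of $\{f(Q') \mid Q' \in \Q'\}$ is inherited from property (1) of $\R'$. The additional path $Q_0 = sxt$ has interior $\{x\}$, which is absent from $\N'$ and hence from every $Q'$, so $Q_0$ is internally disjoint from each $f(Q')$. Simultaneously, since the interiors of the $Q' \in \Q'$ partition $V^\circ(\N') = V^\circ(\N) \setminus \{x\}$, appending $Q_0$ recovers the missing vertex $x$ and yields $\bigcup_{Q \in \Q} V(Q) = V(\N)$, which is condition (1) of Definition \ref{reg}.

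For condition (2), I would split on whether $F = G$. For $F \neq G$ with $F \in \es$, set $Q' = I'(F')$; then $I(F) = f(Q')$, and \eqref{part1ofclaim} applied to $F$ (a legitimate representative of $f(Q')$ by Claim \ref{onlyoneq}) yields $E(f(Q')) \subseteq F$. For $F = G$, what is needed is $\{sx, xt\} \subseteq G$, which is exactly the conclusion of Claim \ref{modify}(3) under the Case I hypothesis. This is the only step where any genuine difficulty could in principle arise, but that difficulty has already been discharged in the proof of Claim \ref{modify}(3).

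Condition (3) and surjectivity of $I$ are then a straightforward count. For $Q = f(Q')$, the preimage $I^{-1}(Q)$ is the subfamily of $F \in \es \setminus \{G\}$ with $F' \in I'^{-1}(Q')$, whose size equals $|E(Q')| - 1 = |E(f(Q'))| - 1$ by property (3) of $\R'$ together with the fact that the un-contraction $Q' \mapsto Q'^{(1)}$ preserves edge count. For $Q = Q_0$, we have $I^{-1}(Q_0) = \{G\}$ of size $1 = |E(sxt)| - 1$. Finally, $I$ is surjective since $f(\Q') = \Q \setminus \{Q_0\}$ is the image of $\es \setminus \{G\}$ and $I(G) = Q_0$, which together cover $\Q$.
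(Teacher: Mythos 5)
Your proposal is correct and follows essentially the same route as the paper: internal disjointness of $\Q$ from the disjointness of the $Q'^{(1)}$ together with $x \notin V(Q'^{(1)})$, condition (2) via \eqref{part1ofclaim} for $F \neq G$ and Claim \ref{modify}(3) for $G$, and conditions (1) and (3) by the same vertex and edge counts. The only cosmetic difference is that you make the surjectivity of $I$ explicit, which the paper leaves implicit in condition (3).
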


By Remark  \ref{disjoint} and the fact that $x \notin \bigcup_{Q' \in \Q'} V(f(Q'))$, $\Q$ is a set of internally disjoint $s-t$ paths.

By \eqref{part1ofclaim} $E(I(F)) \subseteq F$ for all $F \in \es \setminus \{G\}$, and by part (3) of Claim \ref{modify} $E(I(G))=E(Q_0) \subseteq G$.
This implies condition (2) in Definition \ref{reg}.

In addition, 
\[ |I^{-1}(Q)|=|I'^{-1}(f^{-1}(Q))|=|E(f^{-1}(Q))|-1=|E(f^{-1}(Q)^{(1)})|-1
=|E(Q)|-1
\]
for all $Q \in \Q \setminus \{Q_0\}$, and 
\[ |I^{-1}(Q_0)|=1=|E(Q_0)|-1.\]
This yields condition (3) of Definition \ref{reg}.

Furthermore, 
since $\bigcup_{Q' \in \Q'}V^{\circ}(Q')=V^{\circ}(\N)\setminus\{x\}$ and $V^{\circ}(Q'^{(1)})=V^{\circ}(Q')$, we have 
\[
\bigcup_{Q \in \Q}V^{\circ}(Q)=\bigcup_{Q' \in \Q'}V^{\circ}(Q'^{(1)}) \cup \{x\}=V^{\circ}(\N).
\]
This implies  condition (1) of Definition \ref{reg}, thus  completing the proof of the claim.

{\bf Case II:}~~
$f(Q_0')=Q_0'^{(2)}$ for some $Q_0' \in \Q$.

Let $\Q=\{f(Q') \mid Q' \in \Q'\}$ and 
$\es =(F \mid F' \in \es(\R'))  \cup \{G\}$.
Define $I : \es \to \Q$ by $I(F)=f(I'(F'))$ for all $F \in \F \setminus \{G\}$
and $I(G)=f(Q_0')$. 

\begin{claim}\label{regimentationcase2}
 $(\Q,I)$ is (here, too) a regimentation of $\F$.
\end{claim}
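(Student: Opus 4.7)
The plan is to verify the three conditions of Definition \ref{reg} for $(\Q,I)$, mirroring the verification done in Claim \ref{regimentationcase1} but with the extra twist that $f(Q_0')=Q_0'^{(2)}$ introduces the vertex $x$ and the edge $sx$ into one of the lifted paths. Most substantive inputs are already in hand: Claim \ref{modify} (1) pins down $G$, \eqref{part1ofclaim} controls the other sets, and Remark \ref{disjoint} gives internal disjointness. What remains is a careful accounting that the extra edge in $Q_0'^{(2)}$ is exactly compensated by adding $G$ to $\es$.

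First, I would check condition (1), the covering of $V(\N)$. Internal disjointness of $\Q$ is Remark \ref{disjoint}. For coverage, note that $V^\circ(Q'^{(1)})=V^\circ(Q')$ for every $Q'\in\Q'\setminus\{Q_0'\}$, while $V^\circ(Q_0'^{(2)})=V^\circ(Q_0')\cup\{x\}$. Since $\bigcup_{Q'\in\Q'}V^\circ(Q')=V^\circ(\N')=V^\circ(\N)\setminus\{x\}$ by Definition \ref{reg} applied to $\R'$, taking the union over $\Q$ restores $x$ and yields all of $V^\circ(\N)$.

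Second, I would check condition (2), that $E(I(F))\subseteq F$ for every $F\in\es$. For $F\in\es\setminus\{G\}$, the set $I(F)=f(I'(F'))$ was precisely the $s$-$t$ path $Q$ produced so that $E(Q)\subseteq F\cup\bigcup\tines$, and \eqref{part1ofclaim} sharpens this to $E(Q)\subseteq F$. For $F=G$, we have $I(G)=f(Q_0')$, and part (1) of Claim \ref{modify} gives exactly $G\supseteq E(f(Q_0'))$.

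Third, I would verify the cardinality equation $|I^{-1}(Q)|=|E(Q)|-1$. For $Q=f(Q')=Q'^{(1)}$ with $Q'\neq Q_0'$, the map $F\mapsto F'$ is a bijection $I^{-1}(Q)\to I'^{-1}(Q')$, and since the $(1)$-lift preserves edge count, $|I^{-1}(Q)|=|I'^{-1}(Q')|=|E(Q')|-1=|E(Q)|-1$. For $Q=f(Q_0')=Q_0'^{(2)}$, the lift has inserted the edge $sx$, so $|E(Q_0'^{(2)})|=|E(Q_0')|+1$; on the other hand $I^{-1}(Q)$ consists of $G$ together with the preimages of elements in $I'^{-1}(Q_0')$, giving $|I^{-1}(Q)|=|I'^{-1}(Q_0')|+1=|E(Q_0')|=|E(Q_0'^{(2)})|-1$. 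The only potential obstacle is this bookkeeping — ensuring the one extra edge introduced by the $(2)$-lift is paid for by precisely one extra essential set, namely $G$ — but Claim \ref{modify} (2) guarantees uniqueness of $Q_0'$, so the accounting balances exactly.
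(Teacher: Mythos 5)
Your proof is correct and follows essentially the same route as the paper: internal disjointness via Remark \ref{disjoint}, condition (2) from \eqref{part1ofclaim} together with Claim \ref{modify}(1), condition (3) by the same edge/preimage count with the $(2)$-lift's extra edge paid for by $G$ (using Claim \ref{modify}(2) to ensure only $Q_0'$ is lifted this way), and condition (1) from $V^{\circ}(Q_0'^{(2)})=V^{\circ}(Q_0')\cup\{x\}$. No gaps to report.
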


By Remark \ref{disjoint}, $\Q$ is a set of internally disjoint $s-t$ paths.

By \eqref{part1ofclaim} $E(I(F)) \subseteq F$ for $F \in \es \setminus \{G\}$,   and by (1) of Claim \ref{modify} $E(I(G))=E(f(Q_0')) \subseteq G$, so condition (2) of Definition \ref{reg} is fulfilled.

In addition,
\[ |I^{-1}(Q)|=|I'^{-1}(f^{-1}(Q))|=|E(f^{-1}(Q))|-1=|E(f^{-1}(Q)^{(1)})|-1=|E(Q)|-1\]
for all $Q \neq f(Q_0')$. On the other hand, for  $Q=f(Q_0')$, 
\[
|I^{-1}(Q)|=|I'^{-1}(f^{-1}(Q))|+1=|E(f^{-1}(Q))|=|E(f^{-1}(Q)^{(2)})|-1=|E(Q)|-1.\]
This proves condition (3) in Definition \ref{reg}.

Furthermore, since $\bigcup_{Q' \in \Q'}V^{\circ}(Q')=V^{\circ}(\N)\setminus\{x\}$, $V^{\circ}(Q'^{(1)})=V^{\circ}(Q')$ and $V^{\circ}(Q'^{(2)})=V^{\circ}(Q') \cup \{x\}$, we have 
\[
\bigcup_{Q \in \Q}V^{\circ}(Q)=\bigcup_{Q' \in \Q' \setminus\{Q_0'\}} V^{\circ}(Q'^{(1)}) \cup V^{\circ}(Q_0'^{(2)})=V^{\circ}(\N).
\]

 So, condition (1) of Definition \ref{reg} is also valid,  completing the proof of the theorem.  
\end{proof}

\section{Proof of Theorem \ref{main}}
Let us first state the theorem in a slightly stronger form, that allows some of the edge sets to be empty.

\begin{theorem}
Let $\cs$ be a family  of  $2n+k-3$ sets of edges in a bipartite graph $G$,  at most $k-2$ of them being empty. If  $\nu(\bigcup \K) \ge n$ for every $\K \subseteq \cs$ of size $k$ then $\nu_R(\cs)\ge n$.
\end{theorem}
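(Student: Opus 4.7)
The plan is to reduce to a rainbow $s$--$t$ path problem via augmenting paths and apply Theorem \ref{thm: regimentedpaths}. First, applying Theorem \ref{coopdrisko} to any $2n+k-4 = 2(n-1)+k-2$ of the given $2n+k-3$ sets (the condition $\nu(\bigcup\K)\geq n$ trivially implies $\nu(\bigcup\K)\geq n-1$, and the ``at most $k-2$ empty'' hypothesis leaves enough non-empty sets) yields a rainbow matching $M$ of size $n-1$ using $n-1$ of the sets. Let $\F$ denote the remaining $n+k-2$ unused sets. Form the standard augmenting-path network $\N$ with $n-1$ inner vertices (one per $M$-edge), source $s$ identified with an unmatched $A$-vertex, target $t$ with an unmatched $B$-vertex, and each non-$M$ bipartite edge becoming a directed network edge. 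For any $k$-subset $\K\subseteq\F$, the matching of size $n$ in $\bigcup\K$ (guaranteed by the cooperative condition) yields, via symmetric difference with $M$, an $M$-augmenting path, which translates to an $s$--$t$ path in $\N$ with edges from $\bigcup\K$. Since $|\F|=(n-1)+k-1$, Theorem \ref{thm: regimentedpaths} applies: either $\F$ admits a rainbow $s$--$t$ path---then the corresponding rainbow augmenting path extends $M$ to the desired rainbow matching of size $n$---or $\F$ is regimented by some $\R=(\Q,I)$ with $\Q=\{Q_1,\ldots,Q_r\}$, $|\es(\R)|=n-1$, $|\ines(\R)|=k-1$. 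In the latter case, by Corollary \ref{onlypath} each essential $F$ with $I(F)=Q_j$ satisfies $F\supseteq E(Q_j)$, while by Lemma \ref{backward} the inessential sets contain only backward edges.

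\medskip

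For the multi-path case $r\geq 2$, pick any essential $F^*\in\es(\R)$ with $I(F^*)=Q^*$ and consider the $k$-subset $\ines(\R)\cup\{F^*\}$ of $\cs$. Its union lies in $E(Q^*)\cup\bigcup_{\ell}B(Q_\ell)\cup U(Q^*)$. Any matching of size $n$ in this union must cover the unmatched bipartite vertices corresponding to $s$ and $t$; but the only edges of the union touching these are the two endpoint forward edges of $E(Q^*)$, so both must be included. Every edge in $U(Q^*)$ is incident to $s_{Q^*}$ or $t_{Q^*}$ (as bipartite vertices), which are now saturated, so no useless edge can participate. Inside $V(Q^*)$-bipartite, forward and backward edges of $Q^*$ add at most $|E(Q^*)|-2$ more matching edges, for a total of $|E(Q^*)|$ edges on $V(Q^*)$-bipartite. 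The remaining $\sum_{j\neq*}|V^\circ(Q_j)|$ vertices on each side lie in $V^\circ(\N)\setminus V^\circ(Q^*)=\bigsqcup_{j\neq*}V^\circ(Q_j)$ and can only be matched by backward edges within each $Q_j$ ($j\neq*$); the backward bipartite graph on $|V^\circ(Q_j)|$ vertices per side admits a matching of size at most $|V^\circ(Q_j)|-1$. Summing gives at most $|E(Q^*)|+\sum_{j\neq*}(|V^\circ(Q_j)|-1)=n+1-r\leq n-1$, contradicting the cooperative condition.

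\medskip

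The single-path case $r=1$ is the main obstacle. The unique path $Q$ covers all $n-1$ inner vertices, $|E(Q)|=n$, and its forward edges form a perfect matching $P^*$ of the bipartite graph contained in every essential set. To assemble a rainbow copy of $P^*$, I need $n$ distinct sets each supplying one of its $n$ edges; the $n-1$ essential sets supply $n-1$ colors, so one missing color must come from the $n+k-2$ non-essential sets. The inessential sets contain only backward edges (no $P^*$-edge), so the extra color must come either from a set of $M$'s original used family, or via a modified perfect matching $P^{**}\neq P^*$ obtained by a backward-edge swap. The strategy is to apply the cooperative condition to $k$-subsets of the form $\ines(\R)\cup\{F_u\}$ for used $F_u$: since backward edges cannot cover the unmatched bipartite vertices, $F_u$ must contain specific edges through those vertices, and an analysis of how those edges combine with backward edges of $Q$ to produce a matching of size $n$ should force either a $P^*$-edge in some used set or a usable swap. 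Making this case analysis precise---avoiding color conflicts and an infinite regress of successive swaps---is the delicate combinatorial heart of the proof, analogous to but more intricate than the extremal analysis in \cite{akz}.
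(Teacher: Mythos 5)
Your reduction to the network and the application of Theorem \ref{thm: regimentedpaths} follows the paper's route, but both halves of your treatment of the regimented case have problems, and one of them is fatal as written. In the multi-path case you claim a contradiction by bounding the size of a matching inside the union of $\ines(\R)\cup\{F^*\}$, asserting that this union lies in $E(Q^*)\cup\bigcup_\ell B(Q_\ell)\cup U(Q^*)$. That conflates the original edge sets in $G$ with their images under the map $F$: the map discards every edge of $M$, so Lemma \ref{backward} constrains only the non-$M$ edges of these sets, and nothing prevents the sets from containing edges of $M$ itself, which are neither forward, backward nor useless. Allowing them, each inner vertex of every path $Q_j$ with $j\neq *$ can be matched by its own $M$-edge, and your count becomes $|E(Q^*)|+\sum_{j\neq *}|V^\circ(Q_j)|=n$, so no contradiction follows. (Consistently, at the network level the image of the union does contain an $s$--$t$ path, namely $Q^*$ itself, by Lemma \ref{lem.onlypath}, exactly as the hypothesis requires.) So the case of two or more paths cannot be dismissed by this counting; the paper does not dismiss it at all, but treats the regimented case uniformly, whatever the number of paths.

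The single-path case, which you correctly identify as the heart of the matter, is left as a strategy sketch: you explicitly defer ``avoiding color conflicts and an infinite regress of successive swaps,'' and that is precisely where the paper's real work lies. There, one first shows (Claim \ref{nonemptyines}, using the exchange Lemma \ref{lem.exchange}) that the representatives of $M$ can be re-chosen so that some inessential set contains an edge outside $M$, necessarily a backward edge $pq$ of some path; one then applies the cooperative hypothesis to the $k$ sets formed by the $k-1$ inessential ones together with the \emph{used} set $S_p$ representing the matching edge $p$, extracts an edge at an unmatched $A$-vertex, and builds a rainbow augmenting path, correcting a possible double representation of $S_p$ by switching along the cycle through the backward edge (Possibilities I and II). None of these ingredients appears in your outline, and some auxiliary statements are also imprecise (the source should represent \emph{all} unmatched $A$-vertices, and the forward edges of $Q$ are not literally ``a perfect matching contained in every essential set,'' since the first and last network edges have many preimages). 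As it stands, the proposal re-proves only the $2n+k-2$ bound of Theorem \ref{coopdrisko}; the gain of $1$, which is the content of the theorem, is not established.
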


Before proving the theorem, we need the following definition.
\begin{definition}
For a matching $N$ in a graph, a path is called {\em $N$-alternating} if every other edge in it belongs to $N$ and it is called {\em augmenting} if its starting edge and ending edge are not in $N$. 
\end{definition}

   \begin{proof}
Suppose, for contradiction, that $\nu_R(\cs) =: m<n$. Let $M=\{f_S \mid S \in \cs_0\}$ be  a maximal size $\cs$-rainbow matching, 
where $f_S \in S$. Let $\cs_0^c=\cs \setminus \cs_0$. 

Let $A,B$ be the two sides of $G$.
For every  $h\in E(G)$ let $h_A$ be the $A$-vertex of $h$, and $h_B$ the $B$ vertex. 

We construct a network $\N$, having the property that its paths correspond to $M$-alternating paths, and its source-target paths correspond to augmenting $M$-alternating paths. Let $V(\N)=M \cup \{s,t\}$, 
where $s$ represents $U_A:=A\setminus \bigcup M$, and $t$ represents $U_B:=B\setminus \bigcup M$. 

To every edge $h=ab\in E(G) \setminus M$ ($a \in A, b \in B$) we assign an edge $F(h)$ of $\N$, as follows. 

\begin{enumerate}
    \item If $a \in f \in M, ~b \in g \in M$ then $F(h)=fg$.
    
    \item 
If $a \in U_A$ and $b \in g \in M$ then $F(h)=sg$. 

\item 
If $b \in U_B$ and $a \in f \in M$ then $F(h)=ft$. 

\item If $a \in U_A$ and $b \in U_B$  then $F(h)=st$.
\end{enumerate}

For a set $S$ of edges in $G$, let $F(S)$ be the set of edges in $\N$, defined by $F(S)=\{F(h) \mid h \in S \setminus M\}$.
The function $F$ is not one-to-one, because the inverse image of an edge $sh$ ($h \in M$) can be any edge $ah_B$, $a \in U_A$.

Clearly, if $M\cup S$ contains an augmenting $M$-alternating path, then $F(S)$ contains an $s-t$ path in $\N$, and vice versa. 
Let $\F=\{F(S) \mid S \in \cs_0^c\}$.

 Since,  by assumption, $m<n$, $|\cs_0^c|=2n-m+k-3 \ge m+k-1$. 
 If $N$ is a matching of size $n$, then $M \cup N$ contains an augmenting $M$-alternating path, and hence $F(N)$ contains an $s-t$
path. 
Hence, by Theorem  
\ref{thm: regimentedpaths} and Theorem \ref{thm: simplecase}, either

(i) there exists an $\F$-rainbow $s-t$ path $P$, or 

(ii) $|\cs_0^c|= m+k-1$ and $\F$ is regimented.

In  case (i), as mentioned above, $P$ yields an augmenting $M$-alternating path, whose application yields a larger rainbow matching. So  we may assume (ii).
Let $\R=(\Q,I)$ be the regimentation of $\F$.
Let $F^{-1}(\ines(\R))=(S \in \cs_0^c \mid F(S) \in \ines(\R))$.
Since at most $k-2$ sets $S \in \cs$ are empty and $|\ines(\R)|=|\cs_0^c|-|\es(\R)|=k-1$ by Lemma \ref{counting}, $\bigcup F^{-1}(\ines(\R))$ is non-empty.

\begin{claim}\label{nonemptyines}
   It is possible to choose $M$ so that  $\bigcup \ines(\R) \ne \emptyset$.
\end{claim}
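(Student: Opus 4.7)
The plan is to choose the maximum $\cs$-rainbow matching $M$ (together with its choice function $f\colon \cs_0\to M$) that minimizes the quantity $a(M,f) := |\cs_0^c \cap \cs_M|$, where $\cs_M := \{S \in \cs : S\subseteq M\}$. The goal is to show this minimum satisfies $a(M,f) < k-1$, which will imply $\bigcup\ines(\R)\ne\emptyset$.

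First I note that $F(S)=\emptyset$ iff $S\subseteq M$, while any essential $F(S)\in\es(\R)$ contains the edges of the $s$--$t$ path $I(F(S))$ and is therefore non-empty. Consequently every empty set in $\F$ lies in $\ines(\R)$, so $a(M,f)\le|\ines(\R)|=k-1$. If this inequality is strict at the extremum, then $\ines(\R)$ contains some $F(S)$ with $S\in\cs_0^c\setminus\cs_M$, which is non-empty, and the claim is immediate. So I may assume $a(M,f)=k-1$ and aim for a contradiction.

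Since at most $k-2$ sets of $\cs$ are empty while $\cs_0^c\cap\cs_M$ has $k-1$ members, some non-empty $S^*\in\cs_0^c$ satisfies $S^*\subseteq M$. For any $e\in S^*$, set $T:=f^{-1}(e)\in\cs_0$ and perform the swap $\cs_0':=(\cs_0\setminus\{T\})\cup\{S^*\}$ with $f'(S^*)=e$ and $f'|_{\cs_0\setminus\{T\}}=f$. This produces another max rainbow matching with the same underlying edge set $M$, and a direct count shows $a(M,f')-a(M,f)=-1+[T\subseteq M]$. Minimality of $a$ therefore forces $T\subseteq M$, so $\cs_0\cap\cs_M$ is non-empty (it contains $T$).

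The contradiction now comes from applying the $k$-union hypothesis to the family $(\cs_0^c\cap\cs_M)\cup\{T\}$: this is a collection of $k$ distinct members of $\cs$ (since $T\in\cs_0$ while $\cs_0^c\cap\cs_M\subseteq\cs_0^c$), each contained in $M$, so their union lies inside $M$. But in Case (ii) one has $|M|=m=n-1$ (from $|\cs_0^c|=m+k-1$ and $|\cs|=2n+k-3$), hence $M$ admits no matching of size $n$, contradicting the hypothesis of Theorem~\ref{main}. The delicate point is the swap-count $a(M,f')-a(M,f)=-1+[T\subseteq M]$: one must carefully track how $S^*$ and $T$ exchange between $\cs_0$ and $\cs_0^c$ to verify that the only net change in $a$ comes from whether $T\subseteq M$. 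Once this is verified, the result follows cleanly.
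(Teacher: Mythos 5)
Your proof is correct, but it takes a genuinely different route from the paper. The paper fixes the given pair $(M,f)$, assumes $\bigcup F^{-1}(\ines(\R))\subseteq M$, applies the hypothesis to the $k$ sets $F^{-1}(\ines(\R))\cup\{S_0\}$ (where $f_{S_0}$ lies in some inessential $S_1$) to extract an edge $f\in S_0\setminus M$, performs the swap $S_0\leftrightarrow S_1$, and then re-runs the regimentation argument for the swapped family and invokes Lemma~\ref{lem.exchange} to conclude that $F(S_0)$ is inessential for the new regimentation, hence $\bigcup\ines(\tilde\R)\neq\emptyset$. You instead make an extremal choice: minimize $a(M,f)=|\cs_0^c\cap\cs_M|$ over all maximum rainbow matchings with their choice functions, observe (correctly) that empty members of $\F$ are exactly the members of $\cs_0^c$ contained in $M$ and are all inessential, so $a\le|\ines(\R)|=k-1$ with strict inequality giving the claim at once; and when $a=k-1$ your swap of a nonempty $S^*\subseteq M$ against $T=f^{-1}(e)$ changes $a$ by $-1+[T\subseteq M]$, so minimality forces $T\subseteq M$, and then the $k$ distinct members $(\cs_0^c\cap\cs_M)\cup\{T\}$ all lie inside the matching $M$ of size $m<n$, contradicting $\nu(\bigcup\K)\ge n$ directly. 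The underlying swap move is the same as the paper's, but your bookkeeping replaces Lemma~\ref{lem.exchange} and the re-regimentation of the swapped family by a one-line numerical contradiction, which is more elementary and self-contained (it only uses that essential sets are non-empty and $|\ines(\R)|=k-1$); what the paper's version buys in exchange is that a single explicit swap repairs any given representation, and it reuses the exchange lemma that is already needed elsewhere in the argument. Your computation of the change in $a$, including the degenerate possibilities ($T$ equal to $S^*$ as a set, or empty members among $\cs_0^c\cap\cs_M$), checks out in the multiset sense, so there is no gap.
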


This means that $\bigcup F^{-1}(\ines(\R)) \setminus M \ne \emptyset$.

\begin{proof} 
Assume, for contradiction, that $\bigcup F^{-1}(\ines(\R)) \subseteq M$.
Since $\bigcup F^{-1}(\ines(\R))$ is non-empty, there is an element $S_0 \in \cs_0$ such that $f_{S_0} \in M \cap \bigcup F^{-1}(\ines(\R))$.
Let $S_1$ be a set in $F^{-1}(\ines(\R))$ containing $f_{S_0}$.
 By the condition of the theorem, $\bigcup F^{-1}(\ines(\R)) \cup S_0$ contains a matching of size $n$. This, in turn,  means that there exists an edge $f \in \bigcup F^{-1}(\ines(\R)) \cup S_0 \setminus M$.
Since by assumption $\bigcup F^{-1}(\ines(\R)) \subseteq M$, we have $f \in S_0$.
Now we can consider $\cs_1=(\cs_0 \setminus \{S_0\}) \cup \{S_1\}$ as a represented set of $M$ by changing the roles of $S_0$ and $S_1$.
Let $\tilde{\F}=(F(S) \mid S \in \cs_1^c)$.
Then by the same reasoning as  above, we may assume that $\tilde{\F}$ is regimented by $\tilde{\R}=(\tilde{\Q},\tilde{I})$.
By Lemma \ref{lem.exchange}, we have $F(S_0) \in \ines(\tilde{\R})$ and $f \in S_0 \setminus M$, which implies $\bigcup \ines(\tilde{\R}) \ne \emptyset$.
\end{proof}

So,  we  assume $\bigcup \ines(\R) \ne \emptyset$.
Let $pq$ be an edge in $F(S)$ for some $F(S) \in \ines(\R)$.
By Lemma \ref{backward}, $pq$ is a backward edge on some path $Q \in \Q$.
Let $Q=sy_1y_2\ldots y_ct$. 
For each $1\le i < c$  let $e_i$ be the edge connecting the $(y_i)_A$ with $(y_{i+1})_B$, in $G$ (these are the $F^{-1}$ images of the edges of $Q$). 

Let $\ell$ be such that $p=y_\ell$. As $p$ is an edge in $M$, $p$ is contained in a set $S_p \in \cs_0$. By the condition of the theorem, the set $S_p \cup \bigcup F^{-1}(\ines(\R))$ contains a matching $N$ of size $n$. 
Since $|M|<n$,  $N$ contains an edge $ax$, where $a \in U_A$ (recall that $U_A= A \setminus \bigcup M$).
Suppose $x \in U_B$.
If $ax \in \bigcup F^{-1}(\ines(\R))$, then $M \cup \{ax\}$ is a rainbow matching, contradicting the maximality of $M$. Thus we have $ax \in S_p$.
Let $q=y_{\ell'}$ for some $\ell'<\ell$.
Now consider $$N=(M \cup \{ax,p_A q_B\}\cup\{(y_{i})_A (y_{i+1})_B \mid \ell' \leq i \leq \ell-1\})\setminus \{y_{\ell'},y_{\ell'+1},\ldots,y_{\ell}\}.$$
Since $p_A q_B \in S$ and $\{(y_{i})_A (y_{i+1})_B \mid \ell' \leq i \leq \ell-1\}$ has enough represented sets in $I^{-1}(Q)$, then $N$ is a rainbow matching.
However, it is a contradiction to the maximality of $M$ since $N$ has size $|M|+1$.

Hence, we may assume that $x$ lies on an edge $h$ of $M$, meaning that $sh$ is an edge in $F(S_p) \cup \bigcup \ines(\R)$. 
Since all edges in $\bigcup \ines(\R)$ are backwards, and $sh$ is not a backward edge on any path, $sh$ belongs to $F(S_p)$.
 
 Let $h \in V(Q_h)$ for  $Q_h \in \Q$, and let $P$ be the $s-t$ path $shQ_h$.
 Let $\tilde{P}$ be a path in $F^{-1}(P)$, whose first vertex is $a$, meaning that its first edge belongs to $S_p$. 
 Let $X \triangle Y$ be the symmetric difference of $X$ and $Y$, that is, $X \triangle Y=(X \setminus Y) \cup (Y \setminus X)$.
 Let $N=M\triangle E(\tilde{P})$.

 Consider  two possibilities: 

{\bf Possibility I}:
$h =y_d$ for $d \le \ell$. 

In this case  
$N$ is an $\cs$-rainbow matching of size $m+1$: we  let the first edge, $ah_B$, represents $S_p$,
and the other edges in $E(\tilde{P}) \setminus M$ has a represented sets in $I^{-1}(Q)$
and 
keep all other representations as they are. Since the edge in $M$ representing $S_p$ is removed by the symmetric difference, this assignment of representation yields an $\cs$-rainbow matching.

{\bf Possibility II}: Either $h \notin V(Q)$ or $h =y_d$ for $d >\ell$. 

In this case, $N$ is not $\cs$-rainbow, since there are two edges representing $S_p$, namely $p$ and $ah_B$. But this is rectifiable, using the edge $pq$. Suppose that $q=y_b$, where $b<\ell$. Let 
$C$ be the cycle whose edges are $p_Aq_B,q, e_b, y_{b+1}, e_{b+1}, \ldots ,e_{\ell-1}, p=y_\ell$. Let $N'=N\triangle E(C)$. Then $N'$ is a matching of size $m+1$, and it is $\cs$-rainbow, since $S_p$ is represented in it just once - by the edge $ah_B$.
\end{proof}

\section{Somewhere over the rainbow - two possible strengthenings} 

It is possible that Theorem \ref{main} can be given  a strong cooperation generalisation. 

\begin{conjecture}\label{halldrisko}
Let $\F$ be  a family of $2k-1$ sets of edges in a bipartite graph. If  $\nu(\bigcup \K) \ge \min(|\K|,k)$ for every $\K \subseteq \F$  then $\nu_R(\F)\ge k$.
\end{conjecture}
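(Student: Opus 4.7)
The plan is to extend the inductive network-and-regimentation argument from the proof of Theorem \ref{main}, exploiting the Hall-type condition on small sub-families. I induct on $k$. The base case $k=1$ is immediate, since the single set is non-empty.

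For the inductive step, suppose for contradiction that a maximum $\F$-rainbow matching $M$ has size $m<k$, represented by a sub-family $\cs_0\subseteq\F$. Write $\cs_0^c=\F\setminus\cs_0$ (of size $2k-1-m$), and build the network $\N$ with $m$ inner vertices and the family $\F'=(F(S)\mid S\in\cs_0^c)$ exactly as in the proof of Theorem \ref{main}. The goal is to apply Theorem \ref{thm: regimentedpaths} with parameter $k'=2k-2m$ (so that $|\F'|=m+k'-1$), which requires every $k'$-sub-family $\K'\subseteq\cs_0^c$ to contain an $s$--$t$ path in $\N$. When $m\leq k/2$ this is direct: $|\K'|\geq k$ forces $\nu(\bigcup\K')\geq k>m$, yielding an $M$-augmenting alternating path in $\bigcup\K'$. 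When $m>k/2$, instead consider $\K'\cup\cs_0$, of size $2k-m>k$, whose union has $\nu\geq k>m$, and so affords an $M$-augmenting path — but possibly using some non-$M$ edges of $\bigcup\cs_0$.

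The key technical step is to force the non-$M$ edges of that augmenting path into $\bigcup\K'$ alone. I expect the Hall-type condition on sub-families of size less than $k$ to provide enough flexibility to re-choose the representatives $f_S\in\cs_0$ of $M$ within its rainbow-equivalence class, so that any alternating-path edge sitting in $\bigcup\cs_0\setminus M$ can be redirected through $\bigcup\K'$; this is the analogue, in the Hall--Drisko cooperative setting, of the representative exchange carried out in Claim \ref{nonemptyines}. Once the hypothesis of Theorem \ref{thm: regimentedpaths} is secured, either a rainbow $s$--$t$ path arises (augmenting $M$ and contradicting maximality) or $\F'$ is regimented by some $\R=(\Q,I)$. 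In the regimented case, the backward-edge analysis from the proof of Theorem \ref{main} applies: pick a backward edge $pq$ on some path $Q\in\Q$ whose existence comes from a non-empty $\bigcup\ines(\R)$; the $|\K|=k$ case of the hypothesis produces a matching of size $k$ in $S_p\cup\bigcup F^{-1}(\ines(\R))$, which through the Possibility I/II cycle-swap yields a rainbow matching of size $m+1$.

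The main obstacle is the regime $m$ close to $k-1$, where $k'=2$ and only pairs $\K'\subseteq\cs_0^c$ are available to feed into Theorem \ref{thm: regimentedpaths}; here the hypothesis only guarantees $\nu(\bigcup\K')\geq 2$, which is far from the $\nu\geq k$ that would directly ensure an $M$-augmenting alternating path. Bridging this gap — essentially, showing that after a judicious exchange of representatives each pair in $\cs_0^c$ still carries the required augmenting structure — is the crux. I expect it will require either a refined combinatorial rearrangement of $M$ driven by the inductive hypothesis applied to sub-families of $\F$ of size $2(k-1)-1$, or a topological strengthening in the spirit of the Holmsen--Lee proof, based on a near-Lerayness refinement of the complex of edge sets with $\nu<k$.
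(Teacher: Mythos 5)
There is a genuine gap here -- in fact the statement you are attempting is stated in the paper as Conjecture~\ref{halldrisko} and is left open there, so there is no proof to recover, and your proposal does not close it. The crux you yourself flag is exactly where the argument breaks, and it breaks for a concrete quantitative reason. To invoke Theorem~\ref{thm: regimentedpaths} with parameter $k'=2k-2m$ you must show that every $k'$ members of $\cs_0^c$ contain an $M$-augmenting path in their union; the hypothesis only gives $\nu\ge\min(2k-2m,k)$ for such a subfamily, and as soon as $m\ge 2k/3$ this is at most $m$, which does not force an augmenting path at all. Your proposed repair -- adjoining $\cs_0$ to reach a subfamily of size $2k-m>k$ and then ``re-choosing representatives'' so that the augmenting path avoids $\bigcup\cs_0\setminus M$ -- is only an expectation, not an argument. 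It is not analogous to Claim~\ref{nonemptyines}: there the exchange swaps a single representative and is powered by the strong hypothesis that every $k$-union has $\nu\ge n$, whereas under the Hall--Drisko condition the sets of $\cs_0$ may be singletons (only the edge $f_S$ itself), and nothing prevents every augmenting path in $\bigcup\K'\cup\bigcup\cs_0$ from using non-$M$ edges of $\cs_0$-sets that cannot be traded away. No amount of representative swapping within the rainbow-equivalence class is known to fix this, and the paper offers no such lemma.

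A second, independent error occurs in your regimented case: you assert that ``the $|\K|=k$ case of the hypothesis produces a matching of size $k$ in $S_p\cup\bigcup F^{-1}(\ines(\R))$.'' But with the regimentation coming from Theorem~\ref{thm: regimentedpaths} applied with parameter $k'$, Lemma~\ref{counting} gives $|\ines(\R)|=k'-1=2k-2m-1$, so $S_p\cup\bigcup F^{-1}(\ines(\R))$ is the union of only $2k-2m$ sets, and the hypothesis yields only $\nu\ge\min(2k-2m,k)$, which again is $\le m$ in precisely the troublesome regime $m\ge 2k/3$; hence no matching of size $m+1$, and no contradiction with the maximality of $M$, is forced. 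Finally, the induction on $k$ announced at the outset is never actually used (it appears only as a speculative option at the end), so the proposal is a programme rather than a proof; the genuinely new idea needed to beat the easy bound -- the analogue of the ``extra $1$'' analysis, but under the much weaker cooperative condition $\nu(\bigcup\K)\ge\min(|\K|,k)$ -- is still missing, which is consistent with the statement's status as an open conjecture.
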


This  generalises  the following theorem from 
\cite{abkz}:

\begin{theorem}\label{akz}
If $\F=(F_1, \ldots ,F_{2k-1})$ is a family of matchings in a bipartite graph, and $|F_i|=\min(i, k)$ for all $i$, then there exists an $\F$-rainbow matching of size $k$. 
\end{theorem}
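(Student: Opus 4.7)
My plan is to mimic the maximum-rainbow-matching plus augmenting-network strategy that proved Theorem \ref{main} in Section 4, exploiting the individual-size hypothesis $|F_i|\geq \min(i,k)$ in place of the cooperative matching-size hypothesis. I would proceed by induction on $k$; the base $k=1$ is immediate since $|F_1|\geq 1$ yields a rainbow edge. For the inductive step, suppose for contradiction that no $\F$-rainbow matching of size $k$ exists, and let $M$ be a maximum $\F$-rainbow matching of size $m<k$, represented by $\cs_0\subseteq \F$. The unused family $\cs_0^c=\F\setminus\cs_0$ has $2k-1-m$ sets. Build the alternating network $\N=(D,s,t)$ on $V(\N)=M\cup\{s,t\}$ exactly as in Section 3, so that $s$-$t$ paths in $\N$ correspond to $M$-augmenting alternating paths in $G$, and for each $S\in \cs_0^c$ form the edge set $F(S)\subseteq E(D)$.

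The engine that converts the Hall condition on $|F_i|$ into the cooperative hypothesis needed by Theorem \ref{thm: regimentedpaths} is the following observation: at most $m$ indices in $[m]$ can appear among the indices of $\cs_0^c$, so any large enough subfamily $\K\subseteq \cs_0^c$ must contain some $F_j$ with $j\geq m+1$, whence $|F_j|\geq m+1>|M|$. Since $F_j$ is a \emph{matching} larger than $M$, König's theorem gives an $M$-augmenting alternating path in $M\cup F_j$, which translates to an $s$-$t$ path in $F(F_j)$, and a fortiori in $\bigcup F(\K)$. This supplies the cooperative condition ``every $k'$ sets contain an $s$-$t$ path in their union'' in $\N$, so Theorem \ref{thm: regimentedpaths} (or Theorem \ref{thm: simplecase} if we have slack) applies: either we find an $F(\cs_0^c)$-rainbow $s$-$t$ path, which lifts to a rainbow augmenting path and contradicts maximality of $M$, or $F(\cs_0^c)$ is regimented by some $\R=(\Q,I)$.

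In the regimentation case, I would follow the endgame of the proof of Theorem \ref{main}: use the exchange Lemma \ref{lem.exchange} and a swap of represented sets (in the spirit of Claim \ref{nonemptyines}) to ensure $\bigcup \ines(\R)\not\subseteq M$; take a non-$M$ edge in some inessential $F(S)$, identify a backward edge $pq$ on a regimenting path $Q$, and combine $M$ with a prefix/suffix of $Q$ and a fresh edge from $F_j$ with $j\geq m+1$ to form an alternating cycle whose symmetric difference with $M$ is a strictly larger $\F$-rainbow matching. The main obstacle, as I see it, is the numerology: the count $|\cs_0^c|=2k-1-m$ versus the prescribed format ``$n+k'-1$'' of Theorem \ref{thm: regimentedpaths} with $n=m$ does not align cleanly for arbitrary $m$, so one must be careful about exactly how many sets to feed into the network argument and how many to keep in reserve for the swap step. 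The tight extremal structure of the hypothesis $|F_i|=\min(i,k)$—in particular the fact that there are exactly $m$ ``small'' sets to balance against the $m$ used sets in any regimentation—is what makes the argument sharp, and what necessitates the matching hypothesis (the analogous statement for arbitrary edge sets is the still-open Conjecture \ref{halldrisko}).
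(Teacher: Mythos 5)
You should first note that this paper does not actually prove Theorem \ref{akz}: it is quoted from \cite{abkz} as motivation for Conjecture \ref{halldrisko}, so your sketch can only be judged on its own merits. Judged that way, it has a genuine gap, and it is exactly the one you flag but do not resolve: the numerology of the reduction to Theorem \ref{thm: regimentedpaths}. From the hypothesis $|F_i|=\min(i,k)$ the only cooperative guarantee you can certify for the unrepresented family $\cs_0^c$ is with parameter $k'=m+1$ (a subfamily is only forced to contain an augmenting path once it must contain some $F_j$ with $j\ge m+1$; a union of small-index matchings, e.g.\ nested ones or ones lying inside $M$, gives nothing). With $n=m$ inner vertices, Theorem \ref{thm: regimentedpaths} then requires $n+k'-1=2m$ sets, while $|\cs_0^c|=2k-1-m$; equivalently, feeding in all of $\cs_0^c$ forces $k'=2k-2m$, and the needed hypothesis ``every $2k-2m$ sets contain an $s$--$t$ path in their union'' is only certified when $2k-2m\ge m+1$. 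So the machinery applies only when $m\le(2k-1)/3$, and it fails precisely in the critical regime of large $m$. Concretely, for $k=3$ and a maximum rainbow matching of size $m=2$, the three unrepresented sets may include $F_1$ and $F_2$ (sizes $1$ and $2$), and that pair need not contain any $M$-augmenting path, so no admissible choice of $k'$ satisfies the hypotheses of Theorem \ref{thm: regimentedpaths} or \ref{thm: simplecase} with the sets available. (When $m\le (2k-2)/3$ you do not even need regimentation: at least $2k-1-2m\ge m+1$ unrepresented sets have index $>m$ and individually contain augmenting paths, so Theorem \ref{thm: simplecase} with $k'=1$ already gives a contradiction. The whole difficulty of the theorem lives in the large-$m$ case your plan does not reach.)

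The proposed endgame has the same problem in a second place: in the paper's proof of Theorem \ref{main}, the swap and cycle-surgery steps (Claim \ref{nonemptyines} and what follows) crucially use that the union of the $k-1$ inessential sets together with any single represented set has a matching of size $n>|M|$; the analogue here would require the inessential sets plus one represented set to contain an augmenting path, which the size hypothesis does not give when the inessential sets are the small-index matchings. Also, the announced induction on $k$ is never used, and ``keep some sets in reserve for the swap step'' is not a specified argument. So what you have is a correct observation that the Section 2--3 machinery transplants cleanly when the maximum rainbow matching is small, plus an accurate diagnosis of where it breaks; closing the large-$m$ case needs a genuinely new idea (e.g.\ a more careful choice of which sets $M$ represents, or a way to make the small matchings $F_1,\dots,F_m$ carry weight in the network argument), and that idea is missing.
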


Here is another possible strong version of Theorem \ref{main}.

\begin{conjecture}\label{double}
Let $\F=(F_1, \ldots , F_{2k-1})$ be  a system of bipartite sets of edges, sharing the same bipartition, and suppose that $\nu(F_i)\ge k$ for all $i \le 2k-1$. Let $V'$ be a copy of $V$ disjoint from $V$,  let  $F'_i $ be a copy of $F_i$ on $V'$ ($i \le 2k-1$)  and let $\tilde{F}_i=F_i \cup F'_i$ for $i \le 2k-1$. Then the system $(\tilde{F}_i \mid i \le 2k-1)$ has a full rainbow matching. 
\end{conjecture}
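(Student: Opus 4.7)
The plan is to attack the conjecture via a split of the desired rainbow matching across the two copies $V$ and $V'$. A full rainbow matching in $(\tilde F_i)_{i=1}^{2k-1}$ is equivalent to a partition $[2k-1]=I\sqcup I^c$ together with a rainbow $|I|$-matching of $(F_i)_{i\in I}$ on $V$ and a rainbow $|I^c|$-matching of $(F_i)_{i\in I^c}$ on $V'$. My first step is to target $|I|=k$ and $|I^c|=k-1$ and produce the $V$-side matching by applying Drisko's theorem (Theorem~\ref{drisko}) to all $2k-1$ sets $(F_i)$, each of which has $\nu(F_i)\ge k$.

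The second step then requires a rainbow $(k-1)$-matching on $V'$ from the residual $k-1$ sets $(F_j)_{j\in I^c}$, each still with $\nu\ge k$. Formally, this is the case $m=k-1$ of the bipartite Aharoni--Berger conjecture (that $m$ sets of matching number at least $m+1$ carry a rainbow $m$-matching), and it is the principal obstacle to a purely combinatorial proof since Aharoni--Berger remains open. To avoid having to settle Aharoni--Berger outright, I would exploit the substantial flexibility in Drisko's step: usually many valid index sets $I$ exist, and I would adapt the regimentation and exchange machinery of Section~2 (particularly Lemma~\ref{lem.exchange}) to swap indices between $I$ and $I^c$ whenever the current residual lacks a rainbow $(k-1)$-matching. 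Concretely, a $V'$-side failure should correspond to a witness backward edge in a regimentation of the residual family on $V'$, and swapping the representation of a $V$-side set against this backward edge should yield a strictly better configuration; iterating such swaps should terminate in a configuration where both sides are simultaneously matched.

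A tempting parallel plan is to consider the complex $\C=\{E\subseteq E(G\sqcup G'):\nu(E)<2k-1\}$ on the disjoint-union bipartite graph $\tilde G=G\sqcup G'$ and apply Theorem~\ref{partitionmatroidkm}. Since $\tilde G$ is itself bipartite, however, \cite{ahj} gives only $\lambda(\C)\le 4k-4$, which combined with cooperative parameter $1$ would require $4k-3$ sets rather than the $2k-1$ available. Closing this gap appears to demand a much stronger Leray bound --- ideally $\lambda(\C)\le 2k-2$ --- that genuinely exploits the structure $\C=\bigcup_{i+j=2k-2}\mathcal{E}_i\ast\mathcal{E}'_j$ as a union of joins of smaller matching complexes, where $\mathcal{E}_i$ denotes the complex on $E(G)$ of edge sets with $\nu\le i$. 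Proving such a refined bound is the main obstacle along the topological route, and I expect it would require a careful Mayer--Vietoris analysis that exploits the chain structure $\mathcal{E}_i\subseteq\mathcal{E}_{i+1}$ inside the cover, likely combined with an induction on $k$ that feeds the refined bound for smaller parameters back into the nerve calculation.
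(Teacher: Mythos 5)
The statement you are addressing is Conjecture~\ref{double}: the paper does not prove it, and offers no argument beyond two remarks --- that it implies Theorem~\ref{drisko} by pigeonhole, and that it would follow from the conjecture of Aharoni and Berger in \cite{ab} that $n$ matchings of size $n$ in any graph have a rainbow matching of size $n-1$. Your proposal does not close this gap; it is a reduction of one open problem to another. Your first step (apply Theorem~\ref{drisko} to obtain a rainbow $k$-matching on $V$ from $k$ of the $2k-1$ sets) is fine, but the second step --- extracting a rainbow $(k-1)$-matching on $V'$ from the residual $k-1$ sets of matching number at least $k$ --- is exactly the bipartite Aharoni--Berger conjecture at this parameter, which you yourself acknowledge is open. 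The proposed escape via ``swapping'' is not an argument: you give no progress measure, no proof that a swap is available when the residual family fails, and no termination claim. Moreover, the Section~2 machinery does not apply in the way you suggest. Theorem~\ref{thm: regimentedpaths} and Lemma~\ref{lem.exchange} operate on a family of $n+k-1$ edge sets in a network with $n$ inner vertices under a \emph{cooperative} hypothesis (every $k$ sets jointly contain an $s$--$t$ path); your residual family on $V'$ has only $k-1$ members satisfying a singleton hypothesis, and relative to a maximal rainbow matching of size $m$ the network has $m$ inner vertices while only $k-1-m$ sets are unrepresented, so the counting needed to invoke regimentation (or even the weaker Theorem~\ref{thm: simplecase}) fails as soon as $m>(k-2)/2$ --- which is precisely the regime where Drisko-type arguments stop and the Aharoni--Berger difficulty begins.

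Your alternative topological route is likewise incomplete by your own admission: the bound $\lambda\le 4k-4$ from \cite{ahj} applied to the disjoint union is too weak by a factor of two, and the refined Leray bound you would need for the union-of-joins complex is not established (and there is no reason to expect it to be easy, since such a bound would essentially encode the conjecture itself). In short, neither route in the proposal constitutes a proof, and the statement remains a conjecture; if you want a rigorous contribution here, the honest formulation is a conditional statement (e.g., that Conjecture~\ref{double} follows from the bipartite Aharoni--Berger conjecture, or from the graph conjecture of \cite{ab} cited in the paper), which is already what the paper records.
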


This implies Theorem \ref{drisko}, since by the pigeonhole principle either $V$ or $V'$ contains a rainbow matching of size $k$. Conjecture \ref{double} would follow from the following conjecture of the first author and Eli Berger \cite{ab}. 

\begin{conjecture}
$n$ matchings of size $n$ in any graph have a rainbow matching of size $n-1$.
\end{conjecture}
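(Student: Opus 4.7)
The plan is to attempt Conjecture 4.5 via the augmenting-path and regimentation framework developed in Sections 2--3, adapted from bipartite graphs to general graphs. Assume for contradiction that $\F = (F_1, \ldots, F_n)$ is a family of matchings of size $n$ in a graph $G$ with $\nu_R(\F) \le n - 2$, and fix a maximum $\F$-rainbow matching $M$ represented by a subfamily $\F_0 \subseteq \F$. Then at least two matchings $F_i, F_j \notin \F_0$ are unrepresented, and each such $F_\ell$ has $n$ edges, every one of which must fail to extend $M$ into a larger rainbow matching. The starting point is to generalize the network $\N$ from Section 3: replace the source/target pair $(s,t)$ by a single ``boundary'' set $U = V(G) \setminus \bigcup M$, and encode possible $M$-alternating exchanges by an auxiliary (undirected) graph whose vertices are the edges of $M$ together with the vertices of $U$.

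The first step is to run the induction of Theorem 2.16 in this modified network, so that either one obtains an $\F$-rainbow $M$-augmenting structure (contradicting maximality of $M$), or one obtains a regimentation of $\F \setminus \F_0$. In the regimentation case, one applies the exchange lemma (Lemma 2.14) to the two unrepresented matchings $F_i, F_j$, imitating Claim 3.4 and the final case analysis of Theorem 3.1: a backward edge in one essential path, combined with an edge from $F_i$ or $F_j$ landing on $U$, should rotate $M$ into a strictly larger rainbow matching. The role of Claim 3.4 (``it is possible to choose $M$ so that $\bigcup \ines(\R) \neq \emptyset$'') becomes even more important here, because we must guarantee that \emph{both} $F_i$ and $F_j$ contribute some edge outside $M$ to the inessential union, and this is where the larger deficit ($n - |M| \ge 2$ rather than $1$) can be exploited.

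The main obstacle, and the reason this conjecture has remained open, will be the presence of \emph{odd $M$-alternating cycles} in $M \cup F_i$: these do not directly yield augmenting paths, and they are exactly what breaks the clean bipartite analysis encoded in the $s$-$t$ network. The plan for handling them is to use the second unrepresented matching $F_j$ as a ``bridge'': an edge of $F_j$ joining two odd $F_i$-components, or an odd $F_i$-cycle to $U$, can be used to stitch two odd defects into an even augmenting configuration. Formalising this will likely require a refined version of Lemma 2.14 that tracks two exchanged sets simultaneously, together with a parity invariant on the regimentation. A fallback approach, should the combinatorial path stall, is to try to establish the topological bound $\lambda(\C) \le n$ for the complex $\C = \{F \subseteq E(G) : \nu(F) < n - 1\}$ and appeal to Theorem 1.4; here, again, the odd-cycle homology of the matching complex of $G$ is precisely the expected obstruction, and dealing with it is, I expect, the essential difficulty of the conjecture.
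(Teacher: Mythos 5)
You should first note that the statement you are trying to prove is not a result of the paper at all: it is an open conjecture of Aharoni and Berger quoted from \cite{ab} (it generalizes the Ryser--Brualdi--Stein transversal conjecture), and the paper offers no proof of it. Your text is accordingly a research plan rather than a proof, and both routes in it run into concrete, identifiable obstructions. For the combinatorial route, the counting needed to start the regimentation machinery is simply not there. Theorem \ref{thm: regimentedpaths} requires, for a network whose inner vertices are the edges of the current rainbow matching $M$ (so $m=|M|$ inner vertices), a family of at least $m+k-1$ unrepresented edge sets; in the proof of Theorem \ref{main} this is exactly where the hypothesis $|\mathcal{S}|=2n+k-3$ enters, via $2n-m+k-3\ge m+k-1$, i.e.\ $m\le n-1$. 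In your setting there are only $n$ sets in total, so the number of unrepresented sets is $n-m\ge 2$ while $m$ may be as large as $n-2$; the required inequality $n-m\ge m+k-1$ fails badly for every $k\ge 1$, so the induction of Theorem \ref{thm: regimentedpaths} (and hence Lemma \ref{lem.exchange} and the Claim~\ref{nonemptyines}-style exchange) cannot even be invoked --- this failure is independent of bipartiteness. On top of that, the network construction encodes augmenting paths as directed $s$--$t$ paths only because each edge of $M$ has a well-defined $A$-side and $B$-side; in a general graph $M\cup F_i$ may decompose into odd alternating cycles, the correspondence collapses, and your proposal to ``stitch'' two odd defects using the second unrepresented matching is stated as a hope, with no lemma or invariant actually formulated.

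The topological fallback is not merely difficult but provably unavailable at the strength you ask for. For the complex $\mathcal{C}=\{F\subseteq E(G):\nu(F)<n-1\}$, restricting to a large complete bipartite subgraph gives an induced subcomplex, and Leray numbers are monotone under induced subcomplexes (the defining condition quantifies over all $S\subseteq V$). By the sharpness construction for Theorem \ref{drisko} (the Drisko-type example, cf.\ Example 1.9 and \cite{ahj}), the bipartite complex of edge sets with matching number less than $n-1$ has Leray number exactly $2(n-1)-2=2n-4$, so $\lambda(\mathcal{C})\ge 2n-4>n$ once $n\ge 5$; the bound $\lambda(\mathcal{C})\le n$ you propose to establish is false. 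Moreover, even if one only wanted to apply Theorem \ref{partitionmatroidkm} with $k=1$, the true Leray number forces roughly $2n-3$ matchings rather than $n$, which is precisely why this conjecture lies beyond the Kalai--Meshulam-type and regimentation-type methods of this paper. In short: there is a genuine gap --- the essential difficulties (the deficit in the count of available sets, and the odd-cycle/blossom obstruction) are named in your plan but not overcome, and one of your two proposed tools rests on a Leray bound that is demonstrably false.
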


\subsection*{Acknowledgement}
This paper is a part of a project that has received funding from the European Union's Horizon 2020 research and innovation programme under the Marie Sk$\ell$lodowska-Curie grant agreement no. 823748. This work was supported by the Russian Federation Government in the framework of MegaGrant no. 075-15-2019-1926 when R. Aharoni worked on Sections 1, 2 and 3 of the paper.
The research of R. Aharoni was supported by the Israel Science Foundation (ISF) grant no. 2023464 and the Discount Bank Chair at the Technion. 
The research of J. Briggs was supported by ISF Grant No. 326/16, ISF Grant No. 1162/15, and ISF Grant No. 409/16.
The research of M. Cho was supported by the National Research Foundation of Korea (NRF) grant funded by the Korea government(MSIT) (No. NRF-2020R1F1A1A01048490).
The research of J. Kim was supported by BSF Grant No. 2016077, ISF Grant No. 1357/16 and the Institute for Basic Science, Republic of Korea (IBS-R029-C1).
This work was completed while J. Briggs and J. Kim were post-doctoral research fellows at the Technion.

The authors thank the anonymous referees for their helpful comments.

\end{document}